\definecolor{myred}{RGB}{255, 61, 65}
\definecolor{myred}{RGB}{255, 61, 65}
\tikzstyle{startstop} = [rectangle, rounded corners, minimum width=3cm, minimum height=1cm,text centered, draw=black, fill=brown!30]
\tikzstyle{decision} = [diamond, aspect=1.5, inner xsep=0pt, text centered, draw=black, fill=lime!30, text width=1.8cm]
\tikzstyle{processyes} = [rectangle, minimum width=3cm, minimum height=1cm, text centered, draw=black, fill=green!30]
\tikzstyle{processno} = [rectangle, minimum width=3cm, minimum height=1cm, text centered, draw=black, fill=red!30]
\tikzstyle{arrow} = [thick,-latex,>=stealth]
\renewcommand{\phi}{\varphi}
\renewcommand{\epsilon}{\varepsilon}
\renewcommand{\theta}{\vartheta}
\newcommand{\Z}{\mathbb{Z}}
\newcommand{\F}{\mathbb{F}}
\newcommand{\Fp}{\mathbb{F}_p}
\newcommand{\ps}[1]{[\![#1]\!]}
\DeclareMathOperator{\Gal}{Gal}
\def\pFq#1#2#3#4#5{{}_{#1}F_{#2}\left(#3, #4; #5\right)}
\newcommand{\legendre}[2]{\left(\!\frac{#1}{#2}\!\right)}
\theoremstyle{definition}
\newtheorem{defi}{Definition}
\theoremstyle{plain}
\newtheorem{thm}[defi]{Theorem}
\newtheorem{lem}[defi]{Lemma}
\newtheorem{prop}[defi]{Proposition}
\theoremstyle{remark}
\title{Galois Groups of Apéry-like Series Modulo Primes}
\author{Xavier Caruso, Florian Fürnsinn, Daniel Vargas-Montoya and Wadim Zudilin}
\date\today
\begin{document}
\maketitle

\begin{abstract}
We compute the Galois groups of the reductions
modulo the prime numbers $p$ of the generating series of 
Apéry numbers, Domb numbers and Almkvist--Zudilin numbers.
We observe in particular that their behavior is governed
by congruence conditions on $p$.
\end{abstract}

\section{Introduction}

The \emph{Apéry numbers} \begin{equation*}
\alpha_n = \sum_{k=0}^n{\binom nk}^2{\binom{n+k}n}^2 \quad\text{for}\; n=0,1,2,\dots
\end{equation*} are a famous sequence of integer numbers, mostly known for playing a prominent role in Apéry's proof \cite{Ape79} of the irrationality of $\zeta(3)$. Their generating series $f_\alpha\coloneqq \sum_{n=0}^\infty \alpha_n t^n$  enjoys many interesting properties. It is \emph{D-finite}, i.e., it satisfies a linear differential equation with polynomial coefficients. Starting from the differential equation, integrality of the coefficients of its solutions is highly remarkable. Further, the Apéry numbers grow in a controlled manner, making their generating series a \emph{G-function}. Moreover, letting $p$ be an odd prime number, it is known \cite[Theorem 1]{Gessel} that the Apéry numbers have the 
\emph{$p$-Lucas property} for all prime numbers $p$ (see also the general 
result for Apéry-like numbers in \cite{Malik-Straub}). That is, we have 
$\alpha_{np+\ell}\equiv \alpha_n \alpha_\ell \pmod p$ whenever $0 \leq 
\ell < p$. On the level of generating functions this property translates 
to the congruence $f_\alpha \equiv A_p\cdot f_\alpha^p \pmod p$, where 
$A_p \coloneqq \sum_{n=0}^{p-1} \alpha_n t^n$ denotes the truncation 
of $f_\alpha$ at order $p$. This relation also shows that the 
reduction of the generating function $f_\alpha\bmod p$ is algebraic 
over the field of rational functions $\F_p(t)$ and that the extension
it generates is Kummer. 

Algebraicity modulo (almost) all primes $p$ is a phenomenon observed for many classes of D-finite series. More precisely, for \emph{diagonals} of multivariate rational, or, equivalently, algebraic functions this is a consequence of a theorem of Furstenberg~\cite{Fur67}. For \emph{hypergeometric functions} this can be deduced from work by Christol~\cite{Chr86a} and was made explicit by Vargas-Montoya~\cite{Var21}. Further, if Christol's Conjecture~\cite{Chr86} proves to be true, the result of Furstenberg would apply to all \emph{globally bounded} D-finite series.

Having algebraic equations for the generating function modulo prime numbers, it is natural to consider their Galois groups for each prime number $p$. For D-finite series whose reductions modulo infinitely many prime numbers $p$ are algebraic, in   \cite{CFV25} it was (conjecturally) observed that these Galois groups show some uniformity properties across different primes, and seem to be related to the \emph{differential Galois group} of the minimal differential equation satisfied by the series. As an example in \cite[\S~2.1.5]{CFV25}, the Galois groups of the reductions of the Apéry series were computed for many prime numbers using a computer algebra system. However, the pattern that unfolded was stated without a proof, and the aim of this text is to provide one.

Fixing a prime number $p$ and starting from the algebraic relation given by the $p$-Lucas property, the Galois group of $f_\alpha \bmod p$ can canonically be 
seen as a subgroup of $\F_p^\times$ \emph{via} the embedding
$$\begin{array}{rcl}
\Gal\big(\F_p(t,f_\alpha)/\F_p(t)\big) & \longrightarrow & \F_p^\times \\
\sigma & \mapsto & \sigma(f_\alpha)/f_\alpha.
\end{array}$$
We aim to determine the image of the above map.
Throughout the article, we denote by $S$ the unique subgroup of $\F_p^\times$ of index $2$; it is its subgroup of squares, hence the notation.

\begin{thm}~
\label{thm:apery}
\begin{itemize}
\item If $p \equiv 1, 5, 7, 11 \pmod{24}$, then
$\Gal\big(\F_p(t, f_\alpha)/\F_p(t)\big) = S$.

\item If $p \equiv 13, 17, 19, 23 \pmod{24}$, then
$\Gal\big(\F_p(t, f_\alpha)/\F_p(t)\big) = \F_p^\times$.
\end{itemize}
\end{thm}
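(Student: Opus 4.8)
The plan is to translate the statement into Kummer theory over $\F_p(t)$ and then reduce everything to a single factorization statement about the truncation $A_p$. First I would record the consequence of the $p$-Lucas property already noted in the introduction: since $f_\alpha \equiv A_p f_\alpha^p \pmod p$ and $f_\alpha$ is invertible in $\F_p\ps t$ (its constant term is $\alpha_0=1$), we obtain $f_\alpha^{p-1} = A_p^{-1}$ inside $\F_p(t,f_\alpha)$. As $\F_p$ contains all $(p-1)$-th roots of unity, the extension $\F_p(t,f_\alpha)/\F_p(t)$ is Kummer, hence cyclic, and the map $\sigma\mapsto\sigma(f_\alpha)/f_\alpha$ identifies $\Gal\big(\F_p(t,f_\alpha)/\F_p(t)\big)$ with the cyclic subgroup of $\F_p^\times=\mu_{p-1}$ whose order equals the order of $A_p$ in $\F_p(t)^\times/(\F_p(t)^\times)^{p-1}$. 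Because $\F_p^\times$ is cyclic, its subgroup of order $(p-1)/d$ is all of $\F_p^\times$ when $d=1$ and is exactly $S$ when $d=2$; so the theorem amounts to computing this order and checking it is $p-1$ or $(p-1)/2$ in the stated cases.

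Next I would pin down the arithmetic of $A_p$ itself. A short Lucas/Kummer computation of $\binom{p-1+k}{k}\bmod p$ (only the term $k=0$ survives) gives $\alpha_{p-1}\equiv 1$, so $A_p$ is a \emph{monic} polynomial of degree $p-1$ over $\F_p$. Consequently its image in $\F_p(t)^\times/(\F_p(t)^\times)^{p-1}$ is recorded purely by the multiplicities $e_1,\dots,e_r$ of its irreducible factors, and its order equals $(p-1)/d$ with $d=\gcd(p-1,e_1,\dots,e_r)$. Everything thus reduces to the factorization type of $A_p$. Guided by the small cases — for $p=3,5,7$ one finds $A_p\equiv(t+1)^2$, $(t^2-1)^2$, $\big((t-1)(t^2+1)\big)^2$ — I would aim to prove the clean dichotomy: $A_p$ is \emph{squarefree} when $\legendre{-6}{p}=-1$, and is the \emph{square of a squarefree polynomial} when $\legendre{-6}{p}=1$. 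Either statement immediately forces $d\in\{1,2\}$ (all $e_i=1$, so $d=1$, in the squarefree case; all $e_i=2$, so $d=2$, in the square case), and an elementary character computation shows $\legendre{-6}{p}=1$ is equivalent to $p\equiv 1,5,7,11\pmod{24}$, so the two cases line up exactly with the statement.

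The hard part will be establishing this factorization dichotomy uniformly in $p$. I would attack it through the Apéry differential operator $L=\theta^3-t(2\theta+1)(17\theta^2+17\theta+5)+t^2(\theta+1)^3$, with $\theta=t\,\tfrac{d}{dt}$, whose singular polynomial is $1-34t+t^2$ of discriminant $1152=2^7\cdot 3^2$. The squarefree part of this discriminant is $2$, so $1-34t+t^2$ splits over $\F_p$ precisely when $\legendre{2}{p}=1$; this controls whether the finite singular points of $L$ are rational and hence the gross shape of the factorization of $A_p$. The residual condition — the extra factor $\legendre{-3}{p}$ upgrading $\legendre{2}{p}$ to $\legendre{-6}{p}=\legendre{2}{p}\legendre{-3}{p}$ — I expect to extract from the arithmetic of the local exponents of $L$ at these singularities, or equivalently from a Gauss-sum/modular evaluation attached to the weight-$4$ level-$6$ form governing the Apéry numbers, which detects whether the relevant leading coefficients are squares modulo $p$.

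Concretely, I would certify squarefreeness in the case $\legendre{-6}{p}=-1$ by controlling $\gcd(A_p,A_p')$, i.e.\ the discriminant of $A_p$ modulo $p$, and I would exhibit the perfect square in the case $\legendre{-6}{p}=1$ via a truncated square root $B_p$ characterized by $B_p\,f_\alpha^{(p-1)/2}\equiv\pm1$, the failure of $B_p$ to be a polynomial being exactly the obstruction encoding the congruence on $p$. The principal obstacle throughout is that the multiplicities of $A_p$ are governed by the (delicate) reduction behavior of the D-finite function $f_\alpha$ at the singularities of $L$, rather than by any elementary manipulation of the defining binomial sum, so the core of the work is converting the analytic/differential data at those singular points into the precise square class of $A_p$ modulo $p$.
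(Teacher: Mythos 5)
Your opening reduction is sound and is a genuinely different (and clean) framing from the paper's: since $f_\alpha^{p-1}=A_p^{-1}$ and $A_p$ is monic of degree $p-1$ (your computation of $\alpha_{p-1}\equiv 1$ is correct), the degree of $\F_p(t,f_\alpha)/\F_p(t)$ is indeed $(p-1)/d$ with $d=\gcd(p-1,e_1,\dots,e_r)$, the $e_i$ being the multiplicities of the irreducible factors of $A_p$, and the Galois group is then the unique subgroup of $\F_p^\times$ of that order. The paper instead works in the tower $\F_p(t)\subset\F_p(x)\subset\F_p(x,h^2)$ obtained from the substitution $t=\frac{x(1-8x)}{1+x}$ and the identity $f_\alpha=(1+x)h^2$ with $h$ the Franel series; both routes would work if the factorization input were available.

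There are, however, two genuine problems. First, the factorization dichotomy you propose to prove is false in the second case: for $p\equiv 13,17,19,23\pmod{24}$ one has $A_p=(t^2-34t+1)\,B_p^2$ with $\deg B_p=(p-3)/2\geq 1$ (this is the paper's Theorem~\ref{thm:Ap}), so $A_p$ is very far from squarefree. What you actually need is ``some $e_i$ is odd'' versus ``$A_p$ is a perfect square but not a $(2m)$-th power for any $m>1$ dividing $(p-1)/2$''; the latter extra condition is also required to get exactly $S$ rather than a smaller subgroup, and your ``square of a squarefree polynomial'' claim, while sufficient, is itself unproved. Second, and more seriously, the core factorization statement is only a plan: the discriminant of $t^2-34t+1$ is $1152$, so its splitting is governed by $\legendre{2}{p}$ alone, which cannot decide the $\legendre{-6}{p}$ dichotomy (for instance $p\equiv 7$ and $p\equiv 23\pmod{24}$ both have $2$ as a quadratic residue yet fall in opposite cases), and the ``residual condition'' you hope to extract from local exponents or a Gauss-sum/modular evaluation is exactly where all the content lies and is not carried out. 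The paper's mechanism for this step is the algebraic identity $f_\alpha=(1+x)h^2$, which makes the square visible over $\F_p(x)$, combined with an explicit computation of the sign in $H=\pm\,\sigma(H)\cdot(x+1)^{p-1}$ via the evaluation of a truncated $\pFq{2}{1}{[1/3,2/3]}{[1]}{y}$ at $y=1$ and Chu--Vandermonde; descending from $\F_p(x)$ to $\F_p(t)$ through the quadratic $\F_p(t,\sqrt{t^2-34t+1})$ then produces both the congruence conditions and the factor $t^2-34t+1$. Some input of this kind is indispensable, and the proposal does not supply it.
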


\noindent
This result is the Galois theoretic shadow of the following
factorization property of $A_p$.

\begin{thm}
\label{thm:Ap}
There exists a polynomial $B_p \in \Fp[t]$ such that
\begin{itemize}
\item if $p \equiv 1, 5, 7, 11 \pmod{24}$, then $A_p = B_p^2$,
\item if $p \equiv 13, 17, 19, 23 \pmod{24}$, then $A_p = (t^2 - 34t + 1) \cdot B_p^2$.
\end{itemize}
\end{thm}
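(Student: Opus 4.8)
The plan is to turn the statement into a computation in the group $\F_p(t)^\times/(\F_p(t)^\times)^2$ and then to control that class through the symmetric-square structure of the Apéry operator. First I would record the exact consequence of the $p$-Lucas property: the identity $f_\alpha \equiv A_p f_\alpha^p \pmod p$ rewrites, after dividing by the unit $f_\alpha^p$, as $f_\alpha^{p-1} = A_p^{-1}$ in $\F_p\ps t$. Writing $D_p$ for the squarefree part of $A_p$, the two cases of the theorem are equivalent to the assertion that the class of $A_p$ in $\F_p(t)^\times/(\F_p(t)^\times)^2$ is trivial, respectively equal to that of $t^2-34t+1$. Here I would use that the listed residues are all coprime to $24$, so $p\notin\{2,3\}$ and $t^2-34t+1$ is separable mod $p$ (its discriminant is $1152=2^7\,3^2$); this lets me pass from a square class back to an honest factorization $A_p = B_p^2$ or $A_p = (t^2-34t+1)B_p^2$ with $B_p\in\F_p[t]$.

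The structural input is that the order-three Apéry operator $L = \theta^3 - t(2\theta+1)(17\theta^2+17\theta+5) + t^2(\theta+1)^3$, with $\theta = t\,\mathrm{d}/\mathrm{d}t$, is the symmetric square $L = \mathrm{Sym}^2 M$ of a second-order operator $M$ which is the Picard–Fuchs operator of an elliptic family whose singular fibres lie exactly over $0$, $\infty$ and the roots of $t^2-34t+1$. Since the space of solutions of $L$ holomorphic at $0$ is one-dimensional and $M$ has a holomorphic solution $y = 1+O(t)$, one gets $f_\alpha = y^2$. Reducing mod $p$, everything then hinges on the Frobenius structure of $M\bmod p$ over $\F_p(t)$: the square class of $A_p$ is precisely the obstruction to the rank-two object attached to $M$ being split, i.e. to $y$ satisfying a rank-one relation over $\F_p(t)$.

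The dichotomy is then governed by a single imaginary quadratic field. I would show that the relevant complex multiplication is by $\Q(\sqrt{-6})$ and that the Frobenius structure of $M\bmod p$ splits precisely when $p$ splits in $\Q(\sqrt{-6})$; the Legendre-symbol computation $\left(\tfrac{-6}{p}\right) = \left(\tfrac{-1}{p}\right)\left(\tfrac2p\right)\left(\tfrac3p\right)$ shows this happens exactly for $p\equiv 1,5,7,11 \pmod{24}$ and fails exactly for $p\equiv 13,17,19,23 \pmod{24}$, matching the two cases. In the split (ordinary) case I would produce $B_p\in\F_p[t]$ with $y \equiv B_p\,y^p \pmod p$, whence $f_\alpha = y^2 \equiv B_p^2 f_\alpha^p$ and therefore $A_p = B_p^2$. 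In the inert (supersingular) case the structure is irreducible over $\F_p(t)$ but becomes split over the quadratic extension trivializing the Frobenius eigenvalues, and that extension is $\F_p(t)(\sqrt{t^2-34t+1})$ because the discriminant of the elliptic family is supported on the singular locus $t^2-34t+1=0$; this forces the square class of $A_p$ to equal $[\,t^2-34t+1\,]$.

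The main obstacle is this last point: showing rigorously that in the inert case the \emph{entire} squarefree part of $A_p$ is the single quadratic $t^2-34t+1$, i.e. simultaneously that this quadratic divides $A_p$ to odd multiplicity and that every other irreducible factor occurs to even multiplicity. The first half should follow by evaluating $A_p$ along the supersingular fibre; the delicate half is the "nothing else" statement, which is exactly the claim that the only obstruction to splitting the rank-two Frobenius structure globally over $\F_p(t)$ is the quadratic twist by the family's discriminant, with no stray apparent singularities contributing. I expect this to require a careful analysis of the Cartier/Frobenius operator of $M\bmod p$ place by place, most likely packaged through the strong $p$-adic Frobenius structure of $M$ used to establish algebraicity of reductions. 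Consistency with the small cases (for instance $A_5 = (1-t^2)^2$, and the expected shape of $A_{13}$ once $t^2-34t+1\equiv t^2+5t+1\pmod{13}$) serves as a useful check throughout.
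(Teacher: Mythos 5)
Your plan has two genuine gaps, and the second one is the heart of the theorem. First, the dichotomy you want to import --- ``the Frobenius structure of $M \bmod p$ splits iff $p$ splits in $\Q(\sqrt{-6})$'' --- is justified by an appeal to complex multiplication, but the second-order operator $M$ whose symmetric square is the Apéry operator is the Picard--Fuchs operator of a non-isotrivial family over a modular curve of level $6$; it has no CM, and there is no citable theorem converting ``level $6$'' into ``splitting in $\Q(\sqrt{-6})$''. The numerical coincidence is real (the set $\{1,5,7,11\}$ is exactly the kernel of $\left(\tfrac{-6}{\cdot}\right)$ on $(\Z/24\Z)^\times$), but in the paper it emerges from a concrete computation: after the substitution $t=\frac{x(1-8x)}{1+x}$ one has $f_\alpha=(1+x)h^2$ with $h$ the Franel series, the nontrivial element $\sigma$ of $\Gal(\F_p(x)/\F_p(t))$ acts on $h^2$ by a factor $u\cdot(x+1)^2$ with $u=\pm\frac 8 9$ forced up to squares, and the sign is pinned down by a hypergeometric evaluation at $x=-1$ (giving the dependence on $p\bmod 6$) combined with the quadratic characters of $2$ and $-1$ (which is where the modulus $24$ comes from). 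You would need to actually prove your splitting criterion; as written there is no argument for it.

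Second, you explicitly defer the ``nothing else'' step --- that in the inert case the squarefree part of $A_p$ is exactly $t^2-34t+1$, with every other irreducible factor occurring to even multiplicity --- to an unspecified place-by-place analysis of the Cartier operator. This is not a detail to be filled in later: it is the entire content of the factorization statement. The paper disposes of it with a two-line descent that your setup could also exploit: from $f_\alpha=(1+x)h^2$ and the Lucas relations one sees that $A_p=f_\alpha^{1-p}$ is a square in $\F_p(x)$, and $\F_p(x)=\F_p(t,\sqrt{\Delta})$ with $\Delta=t^2-34t+1$; writing $A_p=(u+v\sqrt{\Delta})^2$ with $u,v\in\F_p(t)$ and using $A_p\in\F_p(t)$ forces $uv=0$, so the square class of $A_p$ over $\F_p(t)$ is either trivial or $[\Delta]$ --- stray apparent singularities simply cannot contribute. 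Which of the two cases occurs is then read off from the degree ($e$ versus $p-1$) of the Kummer extension $\F_p(t,f_\alpha)/\F_p(t)$, i.e.\ from Theorem~\ref{thm:apery}. Without an analogue of this descent, or a completed local analysis, your argument establishes at most that $\Delta$ divides the squarefree part of $A_p$ in the inert case, not the stated factorization.
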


The main ingredient in our proof is the fact that the Apéry series $f_\alpha$ is related to the generating function of the \emph{Franel numbers} \[h\coloneqq \sum_{n=0}^\infty \sum_{k=0}^{n}\binom{n}{k}^3x^n\] by a simple rational substitution, namely
$f_\alpha = (1+x)\cdot h^2$ where the variables $t$ and $x$ are linked by the relation
$t = \frac {x(1-8x)}{1+x}$.

The sequence of Ap\'ery numbers has two companions:
the (alternating version of the) Domb numbers
\begin{equation*}
\delta_n = (-1)^n\sum_{k=0}^n \binom{2k}k\binom{2n-2k}{n-k}{\binom nk}^2
\quad\text{for}\; n=0,1,2,\dots
\end{equation*}
and the Almkvist--Zudilin numbers
\begin{equation*}
\xi_n=\sum_{k=0}^n(-1)^{n-k}3^{n-3k}\frac{(3k)!}{k!^3}\binom n{3k}\binom{n+k}n
\quad\text{for}\; n=0,1,2,\dots,
\end{equation*}
in the following sense:
These three sequences satisfy similar difference equations of order~2 and degree~3, and their generating series
\begin{equation*}
f_\alpha = \sum_{n=0}^\infty\alpha_n t^n,
\quad
f_\delta = \sum_{n=0}^\infty\delta_n t^n
\quad\text{and}\quad
f_\xi = \sum_{n=0}^\infty\xi_n t^n
\end{equation*}
admit modular parameterizations via the Hauptmoduln of the three subgroups of index~2
lying between $\Gamma_0(6)$ and its normalizer in~$\operatorname{SL}_2(\mathbb R)$
(see \cite{Chan-Verrill} and \cite{Chan-Zudilin}).

Thanks to \cite[Theorem 2.2]{Chan-Zudilin}, the argument for $f_\alpha$ extends to the series $f_\delta$ and $f_\xi$ with the new relations:
\begin{align*}
f_\delta = (1-8x) \cdot h^2 & \quad \text{with} \quad t = \frac{x(1+x)}{1-8x}, \\
f_\xi = (1+x)(1-8x) \cdot h^2 & \quad \text{with} \quad t = \frac{x}{(1+x)(1-8x)} 
\end{align*}
respectively.

\begin{thm}
\label{thm:domb}
Set
\[
A_{\delta,p}=\sum_{n=0}^{p-1}\delta_nt^n\in\mathbb F_p[t].
\]
Then there exists a polynomial $B_{\delta,p} \in \F_p[t]$ such that:
\begin{itemize}
\item if $p \equiv 1 \pmod{6}$, then $A_{\delta,p} = B_{\delta,p}^2$ and $\Gal\big(\F_p(t, f_\delta)/\F_p(t)\big) = S$,
\item if $p \equiv 5 \pmod{6}$, then $A_{\delta,p} = (64t^2-20t+1) B_{\delta,p}^2$ and $\Gal\big(\F_p(t, f_\delta)/\F_p(t)\big) = \F_p^\times$.
\end{itemize}
\end{thm}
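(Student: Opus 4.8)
The plan is to deduce Theorem~\ref{thm:domb} from the Apéry template (Theorems~\ref{thm:apery} and~\ref{thm:Ap}) applied through the Franel substitution $f_\delta = (1-8x)\cdot h^2$, $t = x(1+x)/(1-8x)$. Exactly as for $f_\alpha$, the $p$-Lucas property of the Domb numbers gives $f_\delta \equiv A_{\delta,p}\cdot f_\delta^{\,p}\pmod p$, whence $f_\delta^{\,p-1} = A_{\delta,p}^{-1}$; the Galois group $\Gal(\F_p(t,f_\delta)/\F_p(t))$ embeds into $\F_p^\times$ by $\sigma\mapsto \sigma(f_\delta)/f_\delta$, and its image is the subgroup cut out by the class of $A_{\delta,p}$ in $\F_p(t)^\times/(\F_p(t)^\times)^{p-1}$. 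So everything reduces to the factorization type of $A_{\delta,p}$, which is the analogue of Theorem~\ref{thm:Ap}.

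First I would obtain a closed form for $A_{\delta,p}$ in terms of the truncated Franel series $H_p \coloneqq \sum_{n=0}^{p-1}\big(\sum_k \binom nk^3\big)x^n$. Since the Franel numbers also satisfy the $p$-Lucas property, $h \equiv H_p\,h^{\,p}\pmod p$, i.e.\ $h^{\,p}\equiv h/H_p$; substituting into $f_\delta=(1-8x)h^2$ and using $(1-8x)^p=1-8x^p$ yields $A_{\delta,p} = (1-8x)^{1-p}H_p^2 = G_p^2$ with $G_p\coloneqq H_p/(1-8x)^{(p-1)/2}\in\F_p(x)$. Thus $A_{\delta,p}$ is already a square in $\F_p(x)$, and the whole question is whether it descends to a square along the degree-two cover $x\mapsto t$. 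That cover has Galois involution $\iota\colon x\mapsto -(1+x)/(1-8x)$, with $x$ satisfying $x^2+(1+8t)x-t=0$ over $\F_p(t)$; the quadratic factor $\Delta_\delta(t)$ appearing in the statement is the branch polynomial $(x_+-x_-)^2$. Because $A_{\delta,p}=G_p^2\in\F_p(t)$, the ratio $\epsilon\coloneqq \iota^*G_p/G_p$ is a global constant with $\epsilon^2=1$: if $\epsilon=+1$ then $G_p\in\F_p[t]$ is the sought $B_{\delta,p}$ and $A_{\delta,p}=B_{\delta,p}^2$, while if $\epsilon=-1$ then $G_p$ is $\iota$-anti-invariant, so $G_p/(x_+-x_-)\in\F_p(t)$ and $A_{\delta,p}=\Delta_\delta(t)\cdot B_{\delta,p}^2$. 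I would record here that both branch points of the cover are $\F_p$-rational (in contrast to the Apéry case, where they involve $\sqrt2$), which is precisely why only a congruence modulo $6$, rather than modulo $24$, governs the answer.

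The heart of the matter is to determine $\epsilon$, and this is the step I expect to be the main obstacle. Evaluating the constant $\epsilon$ on the $\iota$-orbit $\{0,-1\}$ above $t=0$, and using $G_p(0)=H_p(0)=1$ together with $9^{(p-1)/2}=3^{p-1}=1$, collapses it to
\[
\epsilon = \frac{G_p(-1)}{G_p(0)} = H_p(-1) = \sum_{n=0}^{p-1}(-1)^n\sum_{k}\binom nk^3 \pmod p .
\]
Everything thus hinges on the evaluation of this truncated alternating Franel sum, and I would aim to prove $\sum_{n=0}^{p-1}(-1)^n\sum_k\binom nk^3 \equiv \legendre{-3}{p}\pmod p$, the quadratic character that equals $+1$ exactly when $p\equiv1\pmod 6$. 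This is the genuinely arithmetic input; I would establish it either from known Franel-number supercongruences or, more conceptually, by reading $\epsilon$ off the modular parametrization of $h$ as the eigenvalue of the Atkin--Lehner involution corresponding to $\iota$ on $X_0(6)$, whose complex multiplication by $\Q(\sqrt{-3})$ produces exactly the symbol $\legendre{-3}{p}$.

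Finally I would translate the factorization into the Galois statement, verbatim as in the Apéry case. When $p\equiv 1\pmod 6$, $A_{\delta,p}=B_{\delta,p}^2$ forces $f_\delta^{(p-1)/2}=\pm B_{\delta,p}^{-1}\in\F_p(t)$, so the image lies in the unique index-two subgroup $S$; a simple root of $B_{\delta,p}$ (which holds as in the Apéry case) yields a place where $A_{\delta,p}^{-1}$ has valuation $-2$, forcing the Kummer class to have order exactly $(p-1)/2$ and hence $\Gal=S$. When $p\equiv 5\pmod 6$, the separable quadratic $\Delta_\delta$, being coprime to $B_{\delta,p}$, provides a branch place at which $A_{\delta,p}$ has valuation $1$; then $A_{\delta,p}^{-1}$ is a non-square whose Kummer class has full order $p-1$, so $\Gal=\F_p^\times$.
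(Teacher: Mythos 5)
Your proposal is sound in outline but takes a genuinely different route from the paper. The paper works with the tower $\F_p(t_\delta)\subset\F_p(x)\subset\F_p(x,h^2)$, determines the full group $\Gal(\F_p(x,h^2)/\F_p(t_\delta))$ by classifying the prolongations of $\sigma_\delta$ (the admissible constants being $u=\pm\frac19$, squares or not according to $p\bmod{12}$), and then locates $\F_p(t_\delta,f_\delta)$ inside it; the factorization of $A_{\delta,p}$ is deduced at the very end, as in Theorem~\ref{thm:Ap}. You invert this order: you write $A_{\delta,p}=G_p^2$ with $G_p=H_p\cdot(1-8x)^{-(p-1)/2}\in\F_p(x)$ and reduce everything to the single sign $\epsilon=\iota^*G_p/G_p\in\{\pm1\}$ deciding whether $G_p$ descends to $\F_p(t)$ or is anti-invariant, after which the Galois statement follows from Kummer theory and valuations. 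This is an economical repackaging, and your evaluation of $\epsilon$ on the fibre $\{0,-1\}$ over $t=0$ correctly collapses it to $H_p(-1)$, the factor $9^{(p-1)/2}=1$ being exactly why only $p\bmod 6$ matters here, in contrast to the extra $\legendre{2}{p}$ that appears in the Ap\'ery case. Incidentally, your own setup exposes a sign issue: the discriminant of $x^2+(1+8t)x-t$ is $64t^2+20t+1$ (branch points at $t=-\frac14$ and $t=-\frac{1}{16}$), not $64t^2-20t+1$ as written in the statement.

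The one genuine gap is that you do not prove the arithmetic input on which, as you say yourself, everything hinges: the congruence $H_p(-1)=\sum_{n=0}^{p-1}(-1)^n\sum_k\binom nk^3\equiv\legendre{-3}{p}\pmod p$. You only sketch two possible strategies (known supercongruences, or an Atkin--Lehner/CM argument) without carrying either out, so as written the proof is incomplete at its central step. The identity is true, and it is precisely what the proof of Lemma~\ref{lem:sigmaH} establishes: writing $h=g/(1-2x)$ with $g=\pFq{2}{1}{[1/3,2/3]}{[1]}{y}$ and $y=27x^2/(1-2x)^3$, one gets $H=(1-2x)^{p-1}G$ for the truncation $G$ of $g$, and evaluating at $x=-1$ (i.e., $y=1$) reduces the sum to Chu--Vandermonde together with $\binom{p-1}{k}\equiv(-1)^k\pmod p$; citing that lemma closes the gap. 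Two smaller points: the assertion that $B_{\delta,p}$ has a simple root ``as in the Ap\'ery case'' requires the analogue of the paper's lemma showing that $H$ is not a proper power; and in the case $p\equiv5\pmod6$, a single place of odd valuation only rules out $A_{\delta,p}$ being a square, so the conclusion $\Gal=\F_p^\times$ should be combined with the a priori dichotomy that the degree is $e$ or $p-1$ (from \cite{CFV25}), rather than claimed directly as ``full order''.
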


\begin{thm}
\label{thm:az}
Set
\[
A_{\xi,p}=\sum_{n=0}^{p-1}\xi_nt^n\in\mathbb F_p[t].
\]
Then there exists a polynomial $B_{\xi,p} \in \F_p[t]$ such that:
\begin{itemize}
\item if $p \equiv 1,3 \pmod{8}$, then $A_{\xi,p} = B_{\xi,p}^2$ and $\Gal\big(\F_p(t, f_\xi)/\F_p(t)\big) = S$,
\item if $p \equiv 5,7 \pmod{8}$, then $A_{\xi,p} = (81t^2+14t+1) B_{\xi,p}^2$ and $\Gal\big(\F_p(t, f_\xi)/\F_p(t)\big) = \F_p^\times$.
\end{itemize}
\end{thm}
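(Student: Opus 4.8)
The plan is to mirror the proof of Theorem~\ref{thm:apery}, replacing the Apéry substitution by the Almkvist--Zudilin one $f_\xi=(1+x)(1-8x)h^2$ with $t=\frac{x}{(1+x)(1-8x)}$, and to track how the extra factor $(1+x)(1-8x)$ and the change of variable interact with the symmetries of the Franel series $h$. Write $K=\F_p(t)$ and $L=\F_p(x)$. Clearing denominators in $t(1+x)(1-8x)=x$ shows that $x$ is a root of $8tx^2+(7t+1)x-t$, so $[L:K]=2$; since the product of the two roots is $-1/8$, the nontrivial element of $\Gal(L/K)$ is the involution $\iota\colon x\mapsto -1/(8x)$, and a direct computation gives $(x-\iota x)^2=\frac{(7t+1)^2+32t^2}{64t^2}=\frac{81t^2+14t+1}{64t^2}$, so that $L=K\big(\sqrt{81t^2+14t+1}\big)$.

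First I would record the Kummer picture. The $p$-Lucas property for the Almkvist--Zudilin numbers (which follows from the relation above together with the corresponding property for the Franel numbers) gives $f_\xi=A_{\xi,p}\,f_\xi^{\,p}$, hence $f_\xi^{\,p-1}=A_{\xi,p}^{-1}\in K$; thus $\F_p(t,f_\xi)/K$ is a Kummer extension whose Galois group is the subgroup of $\F_p^\times$ of order equal to the order of $A_{\xi,p}$ in $K^\times/(K^\times)^{p-1}$. Combining $f_\xi=(1+x)(1-8x)h^2$, its Frobenius twist, and the Franel relation $h=H_p h^p$ (where $H_p=\sum_{n<p}F_nx^n$ is the truncation), together with $(1+x)(1-8x)=x/t$, I obtain the identity
\[
A_{\xi,p}=\big((1+x)(1-8x)\big)^{1-p}H_p(x)^2=G(x)^2,\qquad G(x)=\big((1+x)(1-8x)\big)^{(1-p)/2}H_p(x).
\]
So $A_{\xi,p}$ is automatically a square in $L^\times$, and the entire question becomes whether its square root $G$ descends to $K$, i.e.\ is $\iota$-invariant.

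The heart of the argument is the computation of $\iota(G)/G$. From $\iota\big((1+x)(1-8x)\big)=-(1+x)(1-8x)/(8x^2)$ one finds $\iota\big(((1+x)(1-8x))^{(1-p)/2}\big)=\left(\frac{-2}{p}\right)x^{p-1}\big((1+x)(1-8x)\big)^{(1-p)/2}$, the symbol arising through $(-1)^{(p-1)/2}$ and $8^{(p-1)/2}=\left(\frac{2}{p}\right)$. Feeding in the functional equation of the Franel truncation under the same involution, $H_p(-1/(8x))=x^{1-p}H_p(x)$ (equivalently $F_{p-1-n}\equiv(-1/8)^nF_n\pmod p$), the powers of $x$ cancel and I get $\iota(G)=\left(\frac{-2}{p}\right)G$. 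As $-2$ is a quadratic residue exactly when $p\equiv1,3\pmod 8$, this reproduces the congruence dichotomy of the statement: in that case $G\in K$ and $A_{\xi,p}=G^2$ is a perfect square $B_{\xi,p}^2$; otherwise $G$ is anti-invariant, $G/(x-\iota x)\in K$, and $A_{\xi,p}=(x-\iota x)^2\cdot(G/(x-\iota x))^2$ equals $81t^2+14t+1$ times a square of $K$, which gives the asserted factorization (polynomiality of $B_{\xi,p}$ being immediate since $81t^2+14t+1$ is squarefree).

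Finally I would upgrade these factorizations to exact Galois groups through the tower $K\subset L\subset L(f_\xi)=L(h^2)$. Granting, as established for the Franel series in the Apéry case, that $\Gal(\F_p(x,h)/L)$ is the full group $\F_p^\times$, the element $h^2$ has order $(p-1)/2$, whence $[L(f_\xi):K]=2\cdot\frac{p-1}{2}=p-1$. If $p\equiv1,3\pmod8$ then $f_\xi^{(p-1)/2}=\pm B_{\xi,p}^{-1}\in K$, so $[\F_p(t,f_\xi):K]$ divides $(p-1)/2$, and the degree identity in the tower forces equality, hence $\Gal=S$; if $p\equiv5,7\pmod8$ then $A_{\xi,p}^{-1}$ is a nonsquare in $K$, so $f_\xi^{(p-1)/2}\notin K$ and $[\F_p(t,f_\xi):K]\nmid(p-1)/2$, leaving the tower no option but $[\F_p(t,f_\xi):K]=p-1$, i.e.\ $\Gal=\F_p^\times$. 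The main obstacle is precisely the two Franel inputs reused here, the functional equation $H_p(-1/(8x))=x^{1-p}H_p(x)$ and the fullness of the Kummer extension $\F_p(x,h)/\F_p(x)$; once these are in hand the Almkvist--Zudilin case differs from the Apéry case only in the bookkeeping of $\iota$ and the factor $(1+x)(1-8x)$, which is what replaces $\left(\frac{-6}{p}\right)$ and $t^2-34t+1$ by $\left(\frac{-2}{p}\right)$ and $81t^2+14t+1$. The degenerate small primes $p=2,3$ should be checked separately by hand.
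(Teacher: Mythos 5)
Your proposal is correct, but it is organized in the opposite direction from the paper's argument, and the comparison is worth recording. The paper first determines the group $\Gal(\F_p(x,h^2)/\F_p(t_\xi))$ by classifying the prolongations of the involution $\sigma_\xi\colon x\mapsto -1/(8x)$ to $\F_p(x,h^2)$ (each prolongation sends $h^2$ to $u\,h^2x^2$ with $u$ constrained by the relation $H=\sigma_\xi(H)\,x^{p-1}$, and is an involution iff $u=\pm 8$), then decides in each congruence class mod $8$ whether $f_\xi$ is fixed by the relevant element of order two, and only afterwards deduces the factorization of $A_{\xi,p}$ from the degree of $\F_p(t,f_\xi)/\F_p(t)$. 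You instead compute the factorization first: your identity $A_{\xi,p}=G^2$ with $G=((1+x)(1-8x))^{(1-p)/2}H_p$ and the single evaluation $\iota(G)/G=\legendre{-2}{p}$ package the paper's case analysis (squareness of $\pm 8$, admissible choices of $u$, sign of $\sigma_\xi(f_\xi)/f_\xi$) into one Legendre symbol, and the Galois group is then recovered by Kummer theory together with the degree count $[\F_p(x,h^2):\F_p(t)]=p-1$ and the compositum bound $[\F_p(x,h^2):\F_p(t,f_\xi)]\le 2$ (your phrase ``the degree identity in the tower forces equality'' is terse but this is the correct justification). The two routes consume exactly the same two nontrivial inputs, which you correctly isolate: the functional equation $H_p(-1/(8x))=x^{1-p}H_p(x)$ --- the analogue of Lemma~\ref{lem:sigmaH}, proved in the paper by the ODE argument plus an evaluation at a special point, and which you state but would still have to prove, since it does not literally follow from the Apéry-case computation, which concerns the different involution $x\mapsto\frac{1-8x}{8+8x}$ --- and the fact that $H$ is not a proper power, which gives $[\F_p(x,h^2):\F_p(x)]=e$. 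Granting these, your argument is complete and, if anything, shorter; what the paper's route buys in exchange is the finer structural information of the analogue of Proposition~\ref{prop:Galh} (the isomorphism type of $\Gal(\F_p(x,h^2)/\F_p(t_\xi))$ and the count of involutions extending $\sigma_\xi$), which your factorization-first argument never needs to make explicit.
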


In Section~\ref{ssec:zoo} we collect more examples of series, where we computationally observed similar patterns.

\paragraph{Acknowledgments}
We thank Alin Bostan for pointing out the observation that the Domb numbers and the AZ-numbers share the same behaviour as the Apéry numbers.

F.F.\ was funded by a DOC Fellowship (27150) of the \href{https://www.oeaw.ac.at/en/}{Austrian Academy of Sciences} at the University of Vienna. Further he thanks the French–Austrian project EAGLES (ANR-22-CE91-0007 \& FWF grant \href{https://doi.org/10.55776/I6130}{10.55776/I6130}) for financial support.

X.C., F.F.\ and D.V.-M.\ thank \href{https://oead.at/en/}{Austria’s Agency for Education and Internationalisation (OeAD)} and Campus France for providing funding for research stays via WTZ collaboration project/Amadeus project FR02/2024.
F.F.\ and W.Z.\ thank the Max Planck Institute for Mathematics (Bonn, Germany) for their hospitality and financial support in May 2025, where the initial discussion on the project commenced.
The work of W.Z.\ was supported in part by the NWO grant OCENW.M.24.112.

\section{The Apéry Numbers}

Throughout this section, we fix an odd prime number $p$ and we
set $t\coloneqq \frac {x(1-8x)}{1+x}$.

\begin{lem} \label{lem:extensionxt}
The extension $\F_p(x)/\F_p(t)$ is a quadratic 
extension, generated by $(t^2-34t+1)^{1/2},$ with nontrivial element $\sigma:x\mapsto \frac{1-8x}{8+8x}$ of the Galois group.
\end{lem}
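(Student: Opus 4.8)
The plan is to realize $\F_p(x)/\F_p(t)$ concretely as the splitting field of an explicit quadratic polynomial in $x$ over $\F_p(t)$, read off its discriminant to obtain the stated generator, and then match the prescribed Möbius substitution with the conjugation automorphism via Vieta's relations.

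First I would clear denominators in the defining relation $t=\frac{x(1-8x)}{1+x}$, obtaining $t(1+x)=x(1-8x)$, i.e.
\[
8x^2+(t-1)x+t=0.
\]
Thus $x$ is a root of $P(X)\coloneqq 8X^2+(t-1)X+t\in\F_p(t)[X]$. The discriminant of $P$ in $X$ is
\[
(t-1)^2-4\cdot 8\cdot t=t^2-34t+1,
\]
which is nonzero in $\F_p(t)$, so (as $p$ is odd) $P$ is separable. For irreducibility it suffices to check that $t^2-34t+1$ is not a square in $\F_p(t)$, equivalently not a square in $\F_p[t]$, equivalently that it has distinct roots; its discriminant as a polynomial in $t$ is $34^2-4=1152=2^7\cdot 3^2$, which is nonzero modulo $p$ whenever $p\ge 5$. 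Hence $P$ is irreducible, $[\F_p(x):\F_p(t)]=2$, and the extension is Galois. The quadratic formula gives $x=\frac{(1-t)\pm\sqrt{t^2-34t+1}}{16}$, so $\sqrt{t^2-34t+1}\in\F_p(x)$ and $\F_p(x)=\F_p(t)\big((t^2-34t+1)^{1/2}\big)$, the asserted generator.

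To identify the nontrivial automorphism $\sigma$, I would note that it must send $x$ to the second root $x'$ of $P$, which is characterized by the Vieta relations $x+x'=\frac{1-t}{8}$ and $x\,x'=\frac{t}{8}$. It then remains to verify that $x'=\frac{1-8x}{8+8x}$ satisfies both relations after substituting $t=\frac{x(1-8x)}{1+x}$: using $1-t=\frac{1+8x^2}{1+x}$ makes the sum identity $x+\frac{1-8x}{8+8x}=\frac{8x^2+1}{8(1+x)}=\frac{1-t}{8}$ transparent, while the product identity $x\cdot\frac{1-8x}{8+8x}=\frac{x(1-8x)}{8(1+x)}=\frac{t}{8}$ is immediate. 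This pins down $\sigma(x)=\frac{1-8x}{8+8x}$, and a quick check that the associated matrix $\left(\begin{smallmatrix}-8&1\\8&8\end{smallmatrix}\right)$ squares to the scalar $72$ confirms that $\sigma$ is an involution, as it must be.

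The computations are entirely routine; the only point demanding a little care is the irreducibility/degree claim, i.e.\ ensuring the map $x\mapsto t$ really has degree $2$ and does not drop. This is precisely what fails at $p=3$ (where $1-8x\equiv 1+x$, so $t$ collapses to $x$ and the discriminant $t^2-34t+1$ becomes the square $(t+1)^2$), which is why the argument requires $p\nmid 1152$. For the primes relevant to \Cref{thm:apery}, all coprime to $24$, this degeneration does not occur.
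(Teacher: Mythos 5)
Your proof is correct and follows essentially the same route as the paper's: exhibit $x$ as a root of $8X^2+(t-1)X+t$ over $\F_p(t)$, read off the discriminant $t^2-34t+1$, and check that $\frac{1-8x}{8+8x}$ is the conjugate root. You merely supply details the paper leaves implicit (irreducibility via the discriminant being a nonsquare in $\F_p(t)$, the Vieta verification, and the caveat that the lemma degenerates at $p=3$ --- a prime excluded anyway by the congruence conditions of \Cref{thm:apery}).
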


\begin{proof}
     We note that $x$ is a solution of the quadratic equation $8x^2+(t-1)x+t=0$ over $\F_p(t)$ with discriminant $\Delta=t^2-34t+1$. An easy calculation shows that $\frac{1-8x}{8+8x}$ also solves this equation.
\end{proof}

Let \[h\coloneqq \sum_{n=0}^\infty \sum_{k=0}^{n}\binom{n}{k}^3x^n \in \Fp\ps{x} \qquad \text{and} \qquad H\coloneqq \sum_{n=0}^{p-1} \sum_{k=0}^{n}\binom{n}{k}^3 x^n \in \Fp[x].\] 
By Lucas' Theorem (on evaluating binomial coefficients modulo $p$) one easily checks that the coefficients of $h$ satisfy Lucas' congruences and we deduce that 
$h=H h^p$. Thus $h^2 = H^{-1/e}$ with $e \coloneq (p{-}1)/2$.

\begin{lem}
 The polynomial $H$ is not an $n$-th power in $\F_p[x]$ for $n\geq 2$.
\end{lem}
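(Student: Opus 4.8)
The plan is to show that $H$ has a simple root in some algebraic extension of $\Fp$, since a polynomial with a simple root cannot be a perfect $n$-th power for any $n \geq 2$. More concretely, I would analyze the factorization structure of $H$ by studying its derivative, i.e., I would aim to prove that $\gcd(H, H')$ is trivial, or at least that $H$ is not divisible by the square of any irreducible polynomial. A clean way to access this is through the relation $h = H h^p$ in $\Fp\ps{x}$: differentiating this identity (using that $(h^p)' = 0$ in characteristic $p$) yields $h' = H' h^p$, so $h'/h = H'/H$ as elements of the fraction field. This expresses the logarithmic derivative of the Franel generating series as a rational function, and the order of vanishing of $H$ at a root can be read off from the residues of $H'/H$, hence from the behaviour of the differential operator annihilating $h$.

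An alternative and perhaps more elementary route is to exhibit the constant term and the structure of $H$ directly. Since $\binom{0}{0}^3 = 1$, we have $H(0) = 1$, so $x = 0$ is not a root; more usefully, I would examine the leading coefficient and the explicit low-order or high-order terms to locate a guaranteed simple root. Because $H \equiv A_p$-type truncations are governed by the hypergeometric/modular origin of the Franel numbers, I expect $H$ to have degree $p-1$ and a root structure compatible with being \emph{squarefree up to a controlled factor}. The cleanest formulation to pursue is: $H$ is separable (squarefree), which immediately forbids $H$ from being an $n$-th power with $n \geq 2$, as such a power would force every irreducible factor to appear with multiplicity divisible by $n \geq 2$.

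To establish separability, I would compute $\gcd(H, H')$. The main obstacle is controlling $H'$, since $H$ is a truncated sum with no simple closed form; but in characteristic $p$ the truncation $H$ satisfies a congruence coming from the order-two differential equation for the Franel numbers, and reducing that differential equation modulo $p$ should give a relation between $H$, $H'$, and $H''$ that pins down the possible multiple roots to lie among the singularities of the differential operator. These singularities are the roots of the leading coefficient of the operator (for Franel numbers this is essentially $x(1-8x)(1+x)$ or a close variant), so any repeated factor of $H$ must divide this explicit polynomial. I would then check by hand that none of $x$, $1+x$, $1-8x$ divides $H$ to order $\geq 2$, using $H(0)=1$ and direct evaluation at the other two points.

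The step I expect to be the genuine difficulty is ruling out repeated roots at the singular points of the differential equation: away from the singularities separability is automatic from the differential equation (a solution with a double root would be forced to vanish identically by uniqueness), but at the singular points the indicial/exponent data must be examined, and this requires knowing the local exponents of the Franel operator modulo $p$. I would resolve this by invoking the explicit second-order operator satisfied by $h$, reducing it modulo $p$, and reading off that the exponents at each singular point are distinct and produce at most simple zeros of $H$. Once squarefreeness is secured, the conclusion that $H$ is not an $n$-th power for $n \geq 2$ is immediate.
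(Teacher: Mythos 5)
Your plan aims at a strictly stronger statement (squarefreeness of $H$) than the paper needs, and the two places where you locate the difficulty are exactly where the argument does not close as written. First, your justification at ordinary points --- ``a solution with a double root would be forced to vanish identically by uniqueness'' --- invokes a characteristic-zero principle that fails over $\F_p$: the initial value problem $y(c)=y'(c)=0$ at an ordinary point has nonzero solutions in $\F_p\ps{x-c}$ (already $y'=0$ is solved by $(x-c)^p$). The step can be rescued for $H$ because $H$ is a \emph{polynomial} solution of \eqref{eq:deq:h} of degree $p-1<p$: differentiating the equation repeatedly and using $P(c)\neq 0$ kills all derivatives $H^{(j)}(c)$ for $j\le p-1$, and Taylor expansion up to order $p-1$ then forces $H=0$; but this has to be argued, it is not ``uniqueness''. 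Second, and more seriously, the step you defer to the indicial data fails as proposed: the indicial polynomial of the operator at each of $0$, $-1$, $1/8$ is a nonzero multiple of $\rho^2$, so the local exponents at the singular points are \emph{not} distinct (both equal to $0$), and no conclusion about multiplicities there can be read off from them. Likewise, ``direct evaluation at the other two points'' means computing $H(-1)=\sum_{n<p}(-1)^n\sum_k\binom nk^3 \bmod p$, which has no elementary closed form; the paper does show $H(-1)=\pm1$, but only later, in the proof of Lemma~\ref{lem:sigmaH}, via the substitution $h=g/(1-2x)$ with $y=27x^2/(1-2x)^3$ and the Chu--Vandermonde identity. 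So the hardest part of your plan is precisely the part left open.

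The paper's proof sidesteps both issues by proving much less than squarefreeness. Citing Lemma~2.1.3 of \cite{CFV25}, it suffices to exhibit \emph{one} root of $H$ outside $\{0,-1,1/8\}$. Since $H$ is monic of degree $p-1$ with constant term $1$, the only way this could fail is $H=(x-\tfrac18)^m(x+1)^{p-1-m}$ for some $m$, and that possibility is refuted by comparing the coefficients of $1$, $x$ and $x^2$ with the Franel numbers $1,2,10$. No evaluation at the singular points and no separability analysis is required. If you want to keep your route, you must either prove $H(-1)\neq 0$ and $H(1/8)\neq 0$ (essentially redoing the computation in Lemma~\ref{lem:sigmaH}) or, more efficiently, lower your target to the paper's.
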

\begin{proof}
The series $h$ satisfies the second order differential equation 
\begin{equation} \label{eq:deq:h} \textstyle
    x(x+1)(8x-1)\frac{d^2h}{dx^2} + (24x^2+14x-1) \frac{dh}{dx} + (8x+2)h = 0. 
\end{equation}
By \cite[Lemma~2.1.3]{CFV25}, $H$ is not an $e$-th power of a polynomial for any $e$, as soon as $H$ has a root different from $0, -1, 1/8$. One checks that $H$ has degree $p{-}1$ and using that $\binom{p{-}1}{n}\equiv (-1)^n \pmod p$ that it has leading coefficient $1$. Also, it is clear that its constant coefficient is $1$, so it does not have $0$ as a root. Assuming that 
\[\textstyle H=\big(x-\frac 1 8\big)^m (x+1)^{p-1-m}\] for some $m$, we deduce by comparing the constant coefficients that $(-1/8)^m\equiv 1 \pmod p$. Comparing the coefficient of $x$ one obtains that
\[m\left(-\frac 1 8 \right) ^{m-1}+ (p-1-m) \equiv 2 \pmod p,\]
so we deduce $m=(p{-}1)/3$ if $p\equiv 1 \pmod 3$ and $m=(2p{-}1)/3$ otherwise.
If $p\equiv 1 \pmod 3$, we can then compare coefficients of $x^2$ and find the contradiction $13\equiv10\pmod p$.
If $p\equiv 2 \pmod 3$, the contradiction already arises at the constant coefficient.
Thus $H$ has a root different from $-1$ and $1/8$, and the claim follows.
\end{proof}

Since $\F_p(x)$ contains all $e$-roots of unity (as they are already in $\F_p$), Kummer theory implies that the extension $\F_p(x, h^2)/\F_p(x)$ is an abelian extension of degree $e$ whose Galois group is canonically isomorphic to $S$.

We now consider the tower of extensions:

\begin{center}
\begin{tikzpicture}[yscale=1.5]
\node(Fpt) at (0,0) { $\F_p(t)$ };
\node(Fpx) at (0,1) { $\F_p(x)$ };
\node(Ext) at (0,2) { $\F_p(x,h^2)$ };
\node[right] at (0.6,2) { $ = \F_p\big(x,\sqrt[e] H\big)$ };
\draw(Fpt)--(Fpx) node[right,midway,scale=0.9] { $\Gal = \left<\sigma\right>$ };
\draw(Fpx)--(Ext) node[right,midway,scale=0.9] { $\Gal = S$ };
\end{tikzpicture}
\end{center}

\noindent
and aim at studying the Galois properties of the total extension $\F_p(x,h^2)/\F_p(t)$.
In order to do this, we first determine the action of $\sigma$ on $H$.

\begin{lem} \label{lem:sigmaH}
    We have 
    \[H=\begin{cases}
        \sigma(H)\cdot (x+1)^{p-1} & \text{if $p\equiv 1 \pmod 6$}\\
      - \sigma(H)\cdot (x+1)^{p-1} & \text{if $p\equiv 5 \pmod 6$.}
    \end{cases}\]
\end{lem}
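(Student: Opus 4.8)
The plan is to establish the identity first up to sign by a squaring argument, then to pin down the sign by a single coefficient comparison that reduces the entire statement to one explicit evaluation of $H$. Everything up to that last evaluation will be formal.

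\emph{Step 1 (the squared identity).} Recall from the introduction that the $p$-Lucas property gives $f_\alpha = A_p f_\alpha^p$, whence $f_\alpha^{1-p} = A_p \in \F_p(t)$. Combining this with $f_\alpha = (1+x)h^2$ and the relation $h = H h^p$ (i.e.\ $h^{1-p} = H$), I get
\[
A_p = f_\alpha^{1-p} = (1+x)^{1-p}\,(h^2)^{1-p} = (1+x)^{1-p}\,H^2,
\]
so that $H^2 = A_p\cdot(1+x)^{p-1}$ in $\F_p(x)$. Now apply $\sigma$. Since $A_p\in\F_p(t)$ is fixed by $\sigma$, since $\sigma(1+x)=1+\frac{1-8x}{8+8x}=\frac{9}{8(1+x)}$, and since $8^{p-1}=9^{p-1}=1$ in $\F_p$ by Fermat, one finds $\sigma(H)^2 = A_p(1+x)^{-(p-1)} = H^2(1+x)^{-2(p-1)}$. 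Extracting square roots in $\F_p(x)$ yields
\[
H = \varepsilon\cdot\sigma(H)\cdot(1+x)^{p-1}, \qquad \varepsilon\in\{+1,-1\},
\]
which is precisely the assertion of the lemma with the sign still to be determined.

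\emph{Step 2 (reduction of the sign to an evaluation).} To compute $\varepsilon$ I would compare the coefficient of $x^{p-1}$ on both sides. Writing $F_n=\sum_{k}\binom nk^3$ for the Franel numbers, one has
\[
\sigma(H)\cdot(1+x)^{p-1}=\sum_{n=0}^{p-1}\frac{F_n}{8^n}\,(1-8x)^n\,(1+x)^{p-1-n},
\]
a polynomial of degree $p-1$ whose coefficient of $x^{p-1}$ is $\sum_n F_n(-1)^n = H(-1)$. Since $H$ is monic of degree $p-1$ (as recorded in the previous lemma), comparing leading coefficients gives $\varepsilon\,H(-1)=1$; as $\varepsilon=\pm1$ from Step~1, this forces $H(-1)=\varepsilon\in\{\pm1\}$. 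Thus the lemma is now equivalent to the single congruence
\[
H(-1)=\sum_{n=0}^{p-1}(-1)^n\sum_{k=0}^n\binom nk^3 \;\equiv\; \legendre{-3}{p}\pmod p,
\]
the right-hand side being $+1$ exactly when $p\equiv 1\pmod 6$.

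\emph{Step 3 (the main obstacle).} The remaining point — the evaluation of the alternating truncated sum of Franel numbers — is where the modulus $6$ enters and is the genuinely arithmetic heart of the argument. I would prove it either by telescoping the three-term recurrence $(n+1)^2F_{n+1}=(7n^2+7n+2)F_n+8n^2F_{n-1}$ against the alternating sign and controlling the boundary contributions at $n=p-1,p$ modulo $p$, or by rewriting $F_n$ as a balanced ${}_3F_2$ and evaluating the resulting character sum as a Jacobi sum attached to the quadratic character, which produces $\legendre{-3}{p}$ through the CM-by-$\sqrt{-3}$ structure underlying the level-$6$ modular parametrization of $h$; alternatively one may simply invoke the known evaluation of this alternating Franel sum. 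This is the step I expect to require the most care, since the sign — unlike the squared identity of Step~1 — is invisible to the purely formal manipulations and must be extracted from the arithmetic of $\F_p$.
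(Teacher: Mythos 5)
Your Steps 1 and 2 are correct and take a genuinely different route to the reduction than the paper does. Where you square the Kummer relations ($H^2 = A_p\,(1+x)^{p-1}$, apply $\sigma$, use $\sigma(1+x)=\tfrac{9}{8(1+x)}$ and Fermat) to obtain $H=\pm\sigma(H)(1+x)^{p-1}$ outright, the paper instead observes that both $H$ and $\sigma(H)(1+x)^{p-1}$ are polynomial solutions of the second-order differential equation~\eqref{eq:deq:h} and concludes $H=a\cdot\sigma(H)(1+x)^{p-1}$ for an a priori arbitrary scalar $a\in\F_p$, which it then evaluates. Your version buys two things: the constant is pinned to $\{\pm1\}$ from the start, and you avoid discussing why the polynomial solution space of the ODE is one-dimensional; the price is a dependency on the external identity $f_\alpha=(1+x)h^2$, which the paper's proof of this lemma does not need (though the paper uses it elsewhere). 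Your leading-coefficient comparison in Step 2, giving $\varepsilon=H(-1)$, is also correct and lands exactly where the paper lands by evaluating its identity at $x=-1$: everything reduces to $H(-1)=\sum_{n=0}^{p-1}(-1)^nF_n\equiv\legendre{-3}{p}\pmod p$.

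That congruence is where your proof stops being a proof. Step 3 lists three candidate strategies (telescoping the Franel recurrence, a Jacobi-sum evaluation, or citing the result) but carries out none of them, and this is not a routine verification: as you yourself note, it is the only place where the modulus $6$ and the sign discrepancy between $p\equiv1$ and $p\equiv5\pmod6$ can enter. The paper closes exactly this gap with a concrete computation: it writes $h=\frac{1}{1-2x}\,{}_2F_1(1/3,2/3;1;y)$ with $y=27x^2/(1-2x)^3$, uses that this hypergeometric series is $p$-Lucas in the variable $y$ so that its truncation equals $g^{1-p}$, deduces $H=(1-2x)^{p-1}G$, and evaluates at $x=-1$ (i.e.\ $y=1$) via Chu--Vandermonde to get $\binom{p-1}{(p-1)/3}\equiv(-1)^{(p-1)/3}$, which is $+1$ or $-1$ according to $p\bmod 6$. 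Until you supply an argument of comparable substance — the telescoping route would require an explicit certificate plus control of the boundary terms at $n=p-1$, and the citation route would need an actual reference (the congruence is indeed known, e.g.\ from Z.-W.\ Sun's work on Franel numbers, even modulo $p^2$) — the lemma is cleanly reduced but not proved.
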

\begin{proof}
    We first notice that $\sigma$ is an involution and that $G=\sigma(H) \cdot (x+1)^{p-1}$ is a polynomial of degree $p{-}1$.
    Besides, one checks that $H$ is a solution of the differential equation~\eqref{eq:deq:h}.
    Expressing now $G$, $\frac{dG}{dx}$ and $\frac{d^2G}{dx^2}$ as $\F_p(x)$-linear combinations of $\sigma(H)$ and its successives derivatives (just by applying the chain and product rules), we find that $G$ is also a solution of the same differential equation.
    Therefore, we must have
    \begin{equation} \label{eq:sigmaH} 
      H=aG = a\cdot \sigma(H) \cdot (x+1)^{p-1}
    \end{equation}
    with $a\in \F_p$.
    Recalling that 
    \[\sigma(H)=\sum_{n=0}^{p-1}\sum_{k=0}^n \binom{n}{k}^3\left(\frac{1-8x}{8+8x}\right)^n\]
    we evaluate Equation~\eqref{eq:sigmaH} at $x=-1$ to obtain $H_{|x=-1}=a$.
    We set $y \coloneqq \frac{ 27 x^2}{(1 - 2x)^3}$ and introduce the
    hypergeometric series
    \[g \coloneqq \pFq 2 1 {[1/3, 2/3]} {[1]} {y} = 
      \sum_{k=0}^{+\infty} \frac{(1/3)_k (2/3)_k}{k!^2} y^k \in \F_p\ps y \subset \F_p\ps{x}\]
    where $(\alpha)_k \coloneq \alpha (\alpha{+}1) \cdots (\alpha{+}k{-}1)$ denotes the \emph{Pochhammer symbol}.
    We let also $G$ denote the truncation at $y^p$ of $g$. We know that
    $h=\frac{g}{1-2x}$, see for example \cite[Ex.~3.4]{SZ15}.  Moreover $g$ is $p$-Lucas as a series in $y$ and thus $G = g^{1-p}$. 
    From the fact that $h$ is also $p$-Lucas, we get
    \[H=h^{1-p} = \frac{(1-2x)^{p-1}}{g^{p-1}}=(1-2x)^{p-1}G. \]
    Evaluating this expression at $x=-1$ gives $a = G_{|x=-1} = G_{|y=1}$.   
    
    Thus, if $p\equiv 1 \pmod 3$, \emph{i.e.}, $p\equiv 1 \pmod 6$ we have
    \[a\equiv  \sum_{k=0}^{p-1}\binom{-1/3}{k}\binom{-2/3}{k} \equiv \sum_{k=0}^{p-1}\binom{(p{-}1)/3}{k}\binom{(2p-2)/3}{k}\equiv\binom{p{-}1}{(p{-}1)/3}\equiv 1 \pmod p.\]
    If $p\equiv 2 \pmod 3$, \emph{i.e.}, $p\equiv 5 \pmod 6$ we then obtain similarly
    \[a\equiv  \sum_{k=0}^{p-1}\binom{-1/3}{k}\binom{-2/3}{k} \equiv \sum_{k=0}^{p-1}\binom{(2p{-}1)/3}{k}\binom{(p-2)/3}{k}\equiv\binom{p{-}1}{(p{-}1)/3}\equiv -1 \pmod p.\]
    We have used the Chu-Vandermonde identity $\sum_{k=0}^r\binom{r}{k}\binom{s}{k}=\binom{r+s}{r}$ and the classical congruence $\binom{p{-}1}{k}\equiv (-1)^k \pmod p$.    
\end{proof}

\begin{prop}
    The field extension $\F_p(x,h^2)/\F_p(t)$ is an abelian Galois extension.
\end{prop}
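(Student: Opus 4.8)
The plan is to show the extension is Galois by lifting $\sigma$ explicitly, and then to deduce that its Galois group is abelian from the fact that the quotient $\Gal(\F_p(x)/\F_p(t))$ is cyclic while the kernel is central. Set $K\coloneqq\F_p(t)$, $M\coloneqq\F_p(x)$ and $L\coloneqq\F_p(x,h^2)=\F_p(x,\sqrt[e]{H})$. Separability of $L/K$ is automatic, since $[M:K]=2$ and $[L:M]=e=(p-1)/2$ are both coprime to $p$; thus it only remains to establish normality and commutativity.

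For normality, the key observation is that $\sigma(H)/H$ is an $e$-th power in $M^\times$. Indeed, \Cref{lem:sigmaH} gives $\sigma(H)/H=\epsilon\,(x+1)^{-(p-1)}$, with $\epsilon=1$ if $p\equiv1\pmod6$ and $\epsilon=-1$ if $p\equiv5\pmod6$. Since $e=(p-1)/2$, we have $(x+1)^{-(p-1)}=\big((x+1)^{-2}\big)^{e}$, while $-1=z^{e}$ for any quadratic nonresidue $z\in\F_p$ by Euler's criterion; hence $\sigma(H)/H=c^{e}$ for an explicit $c\in M^\times$. Writing $\alpha\coloneqq\sqrt[e]{H}$, I would define $\tilde\sigma\colon L\to L$ by $\tilde\sigma|_M=\sigma$ and $\tilde\sigma(\alpha)=c\,\alpha$; the identity $(c\alpha)^{e}=c^{e}H=\sigma(H)$ shows this assignment is compatible with $\alpha^{e}=H$ and therefore defines a $K$-automorphism of $L$ lifting $\sigma$. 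As every $K$-embedding $L\hookrightarrow\overline K$ restricts to an element of $\Gal(M/K)=\langle\sigma\rangle$ (because $M/K$ is normal by \Cref{lem:extensionxt}), the existence of $\tilde\sigma$ forces each such embedding to have image $L$; hence $L/K$ is normal, and therefore Galois.

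It remains to prove that $G\coloneqq\Gal(L/K)$ is abelian. Consider the exact sequence $1\to\Gal(L/M)\to G\to\Gal(M/K)\to1$, in which $\Gal(L/M)$ is isomorphic to $S$ and $\Gal(M/K)=\langle\sigma\rangle$ is cyclic of order $2$. A group having a central subgroup with cyclic quotient is abelian, so it suffices to check that $\Gal(L/M)$ is central in $G$, i.e.\ that $\tilde\sigma$ commutes with every $\tau\in\Gal(L/M)$. Writing $\tau(\alpha)=\zeta\,\alpha$ with $\zeta\in\F_p$ an $e$-th root of unity, and using $\tilde\sigma(\alpha)=c\alpha$, $\tilde\sigma^{-1}(\alpha)=\sigma(c)^{-1}\alpha$ together with the fact that $\sigma$ fixes $\F_p$ (so $\tilde\sigma(\zeta)=\zeta$ and $\tilde\sigma^{2}=\id$ on $M$), a short computation yields $\tilde\sigma\tau\tilde\sigma^{-1}(\alpha)=\zeta\,\alpha=\tau(\alpha)$, whence $\tilde\sigma\tau\tilde\sigma^{-1}=\tau$. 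Conceptually this reflects the general fact that in Kummer theory the conjugation action of $\Gal(M/K)$ on $\Gal(L/M)\cong\mu_e$ is given by the Galois action on $\mu_e$, which is trivial here because $\mu_e\subset\F_p$ is fixed by $\sigma$. Thus $\Gal(L/M)$ is central and $G$ is abelian.

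The heart of the argument, and the only step that is not purely formal, is the normality/lifting step; everything there is reduced to the single assertion that $\sigma(H)/H$ is an $e$-th power in $\F_p(x)^\times$, which in turn rests on \Cref{lem:sigmaH} and on $-1$ being an $e$-th power in $\F_p$. I would be most careful about this last point (which uses $e=(p-1)/2$ and Euler's criterion) and about verifying that the normalization $\tilde\sigma(\alpha)=c\alpha$ genuinely defines an automorphism; once these are in place, the commutator computation is immediate and, since $\Gal(L/M)$ is abelian, independent of the chosen lift.
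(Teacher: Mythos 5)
Your proof is correct and follows essentially the same route as the paper: both arguments reduce to Lemma~\ref{lem:sigmaH} showing that $\sigma(H)/H$ is an $e$-th power in $\F_p(x)^\times$ (the sign $\pm1$ being harmless since $-1=z^e$ for a nonresidue $z$), which yields the prolongations of $\sigma$, and both conclude commutativity from the fact that such prolongations commute with the Kummer group $S$. The only organizational difference is that the paper verifies Galois-ness by counting $e+e=p-1$ automorphisms rather than by your lift-plus-normality argument, which is an equivalent formality.
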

\begin{proof}
    We first determine all the prolongations of $\sigma : \F_p(x) \to \F_p(x)$ to an automorphism of $\F_p(x, h^2)$.
    Such a prolongation is uniquely determined by the value of $\sigma(h^2)$, which needs to satisfy $\sigma(h^2)^e=\sigma(H)^{-1}$. Thus,
    \begin{equation} \label{eq:extensionsigma}
        \sigma(h^2) = u \cdot h^2 \cdot (x+1)^2,
    \end{equation}
    for some $u \in \F_p$.
    Besides, according to Lemma~\ref{lem:sigmaH}, $u$ has to be a square if $p\equiv 1 \pmod 6$ and not a square otherwise.
    Consequently, there are precisely $e$ prolongations of the automorphism~$\sigma$. There are clearly also $e$ prolongations of the identity (given by the group $S$ of squares in $\F_p$ acting by multiplication on $h^2$). Hence, the extension $\F_p(x,h^2)/\F_p(t)$ is Galois. Moreover, any prolongation of $\sigma$ clearly commutes with any element of $S$, making the Galois group abelian.
\end{proof}

\begin{lem} \label{lem:extsigma}
    There exists a prolongation of $\sigma$ to $\F_p(x, h^2)$ that is an involution, if and only if one can take $u=\pm\frac 8 9$ in Equation~\eqref{eq:extensionsigma}.
\end{lem}
\begin{proof}
    We have 
    \[\sigma^2(h^2)=  \sigma(u h^2 (x+1)^2) = u^2 h^2 (x+1)^2 (\sigma(x)+1)^2= u^2 \left(\frac{9}{8}\right)^2 h^2.\] The claim follows.
\end{proof}

\begin{prop} \label{prop:Galh}
    The Galois group of $\F_p(x,h^2)/\F_p(t)$ is 
    \[\Gal(\F_p(x,h^2)/\F_p(t)) = \begin{cases}
        \Z/(p{-}1)\Z\simeq \Z/e\Z\times \Z/2\Z & \text{if } p\equiv 3 \pmod 4\\
        \Z/(p{-}1)\Z & \text{if } p\equiv 13, 17 \pmod {24}\\
        \Z/e\Z\times \Z/2\Z& \text{if } p\equiv 1, 5 \pmod {24}.
    \end{cases}
    \]
    This corresponds to the cases where precisely one of the two numbers $u =\pm \frac 8 9$ corresponds to an involution $\sigma: h^2\mapsto u h^2 (1+x)^2$ in $\Gal(\F_p(x, h^2)/\F_p(t)$, none of them do, respectively both of them do.
\end{prop}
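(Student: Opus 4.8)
The plan is to pin down the isomorphism type of the abelian group $G \coloneqq \Gal(\F_p(x,h^2)/\F_p(t))$. By the preceding results it sits in an exact sequence $1 \to S \to G \to \langle\sigma\rangle \to 1$ in which $S$ is cyclic of order $e = (p{-}1)/2$ and $\langle\sigma\rangle$ has order $2$, so $G$ is abelian of order $p{-}1 = 2e$ and contains a cyclic subgroup of index $2$. Any such abelian group is isomorphic either to $\Z/(p{-}1)\Z$ or to $\Z/e\Z \times \Z/2\Z$ (look at the $2$-part, which must be $\Z/2^{a+1}\Z$ or $\Z/2^a\Z \times \Z/2\Z$ where $2^a \,\|\, e$), so the task is to decide which occurs in each residue class. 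First I would dispose of the case $p \equiv 3 \pmod 4$, where $e$ is odd: then $\gcd(e,2)=1$, so $\Z/(p{-}1)\Z \cong \Z/e\Z \times \Z/2\Z$ and there is nothing to decide. This covers $p \equiv 7, 11, 19, 23 \pmod{24}$; note that here $S$ has odd order, so the unique involution of $G$ lies in the nontrivial coset, i.e.\ exactly one lift of $\sigma$ is an involution, matching the statement.

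The substantive case is $p \equiv 1 \pmod 4$, where $e$ is even; write $2^a \,\|\, e$ with $a \geq 1$. I would reduce the structure of $G$ to that of its $2$-Sylow $G_2$, which contains $S_2 \cong \Z/2^a\Z$ with index $2$. The key point is that $G$ is cyclic if and only if $G_2 = \Z/2^{a+1}\Z$, which happens exactly when $G$ has a single involution; since $S_2$ already supplies one, this is equivalent to no involution of $G$ lying in the nontrivial coset $G \setminus S$, i.e.\ no lift of $\sigma$ being an involution. In the contrary case $G_2 \cong \Z/2^a\Z \times \Z/2\Z$ there are exactly two involutions lifting $\sigma$. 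So the whole question becomes: how many lifts of $\sigma$ are involutions?

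By Lemma~\ref{lem:extsigma} together with \eqref{eq:extensionsigma}, the lift of $\sigma$ attached to a parameter $u$ is an involution precisely when $u = \pm\frac 8 9$; and, as in the proof that the extension is abelian, Lemma~\ref{lem:sigmaH} forces an admissible $u$ to be a square when $p \equiv 1 \pmod 6$ and a non-square when $p \equiv 5 \pmod 6$. Thus everything reduces to the quadratic residue status of $\frac 8 9$ and $-\frac 8 9$. Since $9$ is a square, $\frac 8 9$ is a square iff $2$ is, i.e.\ $p \equiv \pm 1 \pmod 8$, while $-\frac 8 9$ is a square iff $-2$ is, i.e.\ $p \equiv 1, 3 \pmod 8$; and for $p \equiv 1 \pmod 4$ both numbers $\pm\frac 8 9$ share the same status because $-1$ is a square. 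Tabulating these conditions against $p \bmod 6$ and combining them class by class modulo $24$ shows that both values are admissible, giving two involutions and hence $G \cong \Z/e\Z \times \Z/2\Z$, exactly when $p \equiv 1, 5 \pmod{24}$, and neither is admissible, giving no involution and hence $G \cong \Z/(p{-}1)\Z$, exactly when $p \equiv 13, 17 \pmod{24}$.

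The main obstacle I anticipate is the group-theoretic bookkeeping in the even case: one must argue cleanly that the isomorphism type of $G$ is detected by whether $\sigma$ lifts to an involution, and match the count of involutions in the nontrivial coset (zero versus two) with the admissibility of $u = \pm\frac 8 9$, while noting that the ``precisely one'' phenomenon only arises in the degenerate odd-$e$ case. The residue computations are routine given the standard supplements to quadratic reciprocity for $2$ and $-2$.
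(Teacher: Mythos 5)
Your proposal is correct and follows essentially the same route as the paper: reduce the dichotomy $\Z/(p{-}1)\Z$ versus $\Z/e\Z\times\Z/2\Z$ to counting involutions lifting $\sigma$, characterize those via Lemma~\ref{lem:extsigma} as the admissible values $u=\pm\frac{8}{9}$ constrained by the square/non-square condition from Lemma~\ref{lem:sigmaH}, and finish with the quadratic residue computation for $2$ and $-1$ (equivalently $-2$) modulo $24$. Your $2$-Sylow bookkeeping is a slightly more explicit version of the paper's one-line observation that the $\Z/2\Z$ factor splits off iff some involution lies outside $S$; the case tabulation agrees with the paper's.
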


\begin{proof}
Note that $\frac 8 9 = 2 \cdot \left(\frac 2 3\right)^2.$ Moreover, $2$ is a square modulo $p$ if and only if $p\equiv \pm 1 \pmod 8$ and $-1$ is a square modulo $p$ if and only if $p\equiv 1 \pmod 4$. 

If $p\equiv 1 \pmod 6$ then, for any prolongation of $\sigma$, as in Equation~\eqref{eq:extensionsigma}, $u$ must be a square; if $p\equiv 5 \pmod 6$ the opposite must hold. As for $p\equiv 3 \pmod 4$, precisely one of the two values of $\pm \frac 8 9$ is a square, we always have precisely one adequate choice for $u$. If $p\equiv 1 \pmod 4$, either both of them are a square, or none of them are. 
\begin{itemize}
    \item If $p\equiv 1 \pmod{24}$, then $u$ has to be a square, and $\pm \frac 8 9$ both are squares.
    \item If $p\equiv 5 \pmod{24}$, then $u$ has to be a non-square, and $\pm \frac 8 9$ both are non-squares.
    \item If $p\equiv 13 \pmod{24}$, then $u$ has to be a square, but $\pm \frac 8 9$ both are non-squares.
    \item If $p\equiv 17 \pmod{24}$, then $u$ has to be a non-square, but $\pm \frac 8 9$ both are squares.
\end{itemize}
This concludes the proof of the second statement. 

For the first statement, we note that $\Gal(\F_p(x,h^2)/\F_p(t))$ always contains a cyclic group of order $e$, corresponding to $\Gal(\F_p(x,h^2)/\F_p(x)) \simeq S$.
This thus leaves us only with two possible choices, either $\Z/(p{-}1)\Z$ or $\Z/e\Z\times \Z/2\Z$ (which actually collapse in the case where $p\equiv 3 \pmod 4$). The direct factor $\Z/2\Z$ occurs if and only if there exists an element of the Galois group of order $2$, which does not belong to $S$, \emph{i.e.}, a prolongation of $\sigma$ which is an involution. We conclude using Lemma~\ref{lem:extsigma}.
\end{proof}

We recall that $f=h^2\cdot (x+1)$. This shows that $\F_p(t, f)\subseteq \F_p(x, h^2).$ Moreover $\F_p(t, f)$ is a cyclic extension of $\F_p(t)$, of degree $p{-}1$ or $e$, as explained in \cite[Subsection~2.1.5]{CFV25}.

\begin{proof}[Proof of Theorem~\ref{thm:apery}.]
    We distinguish cases according to the congruence class of $p$ modulo $24$:

    \begin{itemize}
        \item If $p\equiv 1 \pmod{24}$, the Galois group of $\F_p(x, h^2)/\F_p(t)$ is not cyclic, according to Proposition~\ref{prop:Galh}. Thus, $\F_p(t, f)$ is a proper subfield of $\F_p(x, h^2)$, necessarily of degree $e$ over $\F_p(t)$.
        \item If $p\equiv 5 \pmod{24}$ the Galois group of $\F_p(x, h^2)/\F_p(t)$ is not cyclic either, according to Proposition~\ref{prop:Galh}. We conclude again that $\F_p(f,t)$ has degree $e$ over $\F_p(t)$.
        \item If $p\equiv 7 \pmod{24}$, the Galois group of $\F_p(x, h^2)/\F_p(t)$ is cyclic, because $e$ is odd. Then, we need to check whether $f$ is in the unique subfield of $\F_p(x, h^2)$ of index 2. This is equivalent to checking whether it is invariant under the unique element  of the Galois group of order $2$. According to Proposition~\ref{prop:Galh}, this element $\sigma$ is uniquely given as the prolongation of $x\mapsto \frac{1-8x}{8+8x}$ with $\sigma(h^2)=u\cdot h^2\cdot (x+1)^2$, where the prefactor $u$ has to be $\frac89$. We have 
        \[\textstyle \sigma(f)= \sigma(h^2) (\sigma(x)+1) = \frac 8 9 h^2 (x+1)^2 (\sigma(x)+1) = h^2\cdot (x+1) = f.\]
        So indeed, $f$ is fixed by $\sigma$, and the extension has degree $e$.
        \item If $p\equiv 11 \pmod{24}$ one proceeds as for $7$, as $u=\frac 8 9$ in the definition of the involution $\sigma$.     
        \item If $p\equiv 13 \pmod{24}$, the group $\Gal(\F_p(x, h^2)/\F_p(t))$ is cyclic, and the unique element of order $2$ in the Galois group is an element of $S$, sending $h^2$ to $-h^2$. Thus $f$ is not fixed and $\F_p(t, f)\simeq \F_p(x, h^2)$, having degree $p{-}1$ over $\F_p(t)$.
        \item The case $p\equiv 17 \pmod{24}$ works as $p\equiv 13 \pmod{24}$, with the same conclusion.
        \item If $p\equiv 19 \pmod{24}$, we proceed as in the case $p\equiv 7 \pmod{24}$, with the exception that $u=-\frac 8 9$. Thus $\sigma(f)=-f$, and $f$ is not in the unique subfield of index $2$ of $\F_p(x, h^2)$. Thus $\F_p(t, f)$ has degree $p{-}1$.
        \item If $p\equiv 23 \pmod{24}$ we proceed as for $p\equiv 19 \pmod{24}$, as $u=-\frac 8 9$ again.
    \end{itemize}
This concludes the proof.
\end{proof}

\begin{proof}[Proof of Theorem~\ref{thm:Ap}.]
We recall that we have the relation $f = (1+x) h^2$ in $\F_p(t,f)$.
Therefore $f^e = (1+x)^e H \in \F_p(x)$, from which we deduce that $A_p = (1+x)^{p-1} H^2$. In particular, $A_p$ is a square in $\F_p(x)$.
Since moreover $\F_p(x)=\F_p(t, \sqrt{\Delta})$ with $\Delta = t^2-34t+1$, we can write $A_p=(u+v\sqrt{\Delta})^2$ for $u, v\in \F_p(t)$.
As $A_p\in \F_p(t),$ we must necessarily have that $2uv=0$.
Hence either $u = 0$, in which case $A_p = (t^2-34t+1) v^2$, or $v = 0$, in which case $A_p = u^2$.
Given that $A_p$ is a square if $\F_p(t)$ if and only if the extension $\F_p(t,f)/\F_p(t)$ has degree $e$, we conclude using Theorem~\ref{thm:apery}.
\end{proof}

\section{Adaptation to Apéry-like Sequences}

The \emph{generalized} Apéry numbers $a_n(r,s)=\sum_{k=0}^n \binom{n}{k}^r \binom{n+k}{n}^s$ are $p$-Lucas for all primes $p$ \cite{DS06}. So we have $f_{r,s}=A_{r,s} f_{r,s}^p(x)$ for the generating series $f_{r,s}$ of $a_n(r,s)$ and its truncation $A_{r,s}$ at order $p$. For $(r,s)\in \{(0,0), (1, 0), (0,1), (1,1), (2,0)\}$ the generating function $f_{r,s}$ is algebraic (for $(r,s)=(1,1)$ it is given by $(2x(\sqrt{1-4x}-1))^{-1}$,  for $(r,s)=(1,1)$ it is given by $\sqrt{1 - 6t + t^2}^{-1}$ and for $(r,s)=(1,1)$ it is given by $\sqrt{1-4x}^{-1}$). Only for the case $(r,s)\in\{(2,2), (4,0)\}$ we (computationally) observed that $A_{r,s}$ is a square for a class of prime numbers depending on some congruence conditions: the case $(r,s)=(2,2)$ are the regular Apéry numbers, and $(4,0)$ is studied in Section~\ref{ssec:zoo}, see level $10$ in Table~\ref{tab:2}. So generalizing the investigation in this direction does not show many new interesting patterns.

As explained in the introduction, the Domb numbers and the AZ numbers are closely related to Apéry numbers. The proofs of very similar patterns, as stated in Theorems~\ref{thm:domb} and~\ref{thm:az}, proceed analogously to the Apéry case.
We report in Sections~\ref{ssec:domb} and~\ref{ssec:AZ} on the changes that need to be made. 

In the last section, Section~\ref{ssec:zoo}, more sequences of similar flavor are analyzed.

\subsection{The Domb Numbers}
\label{ssec:domb}

We set $t_\delta\coloneqq \frac{x(x+1)}{1-8x}$. Then the relation to $h(x)$ is given by $\frac 1 {1-8x} f_\delta(t_\delta)=h(x)^2$. In Lemma~\ref{lem:extensionxt} we replace the involution $\sigma$ by $\sigma_\delta:x\mapsto \frac{1+x}{8x-1}$, and the generator of the field extension $\F_p(x)/\F_p(t_\delta)$ is $(64t_\delta^2-20t_\delta+1)^{1/2}$.

As in Lemma~\ref{lem:sigmaH} we obtain

\[H=\begin{cases}
        \sigma_\delta(H)\cdot (8x-1)^{p-1} & \text{if $p\equiv 1 \pmod 6$}\\ - \sigma_\delta(H)\cdot (8x-1)^{p-1} & \text{if $p\equiv 5 \pmod 6$.}
    \end{cases}\]

In Lemma~\ref{lem:extsigma}, the automorphism $\sigma_\delta$ can be extended as $\sigma(h^2)=u \cdot h^2 \cdot (8x-1)^2$ for $u$ being either a square or a non-square, depending on the congruence class of $p\bmod 6$. Such a prolongation is an involution on $\F_p(x, h^2)$ if and only if one can take $u=\pm \frac 1 9$. One of these values always is a square, while the other one is a square if and only if $p\equiv 1 \pmod 4$. Thus,  
    \[\Gal(\F_p(x, h^2)/\F_p(t_\delta)) = 
        \begin{cases}
            \Z/(p{-}1)\Z\simeq \Z/e\Z\times \Z/2\Z & \text{if } p\equiv 7, 11 \pmod{12}\\
            \Z/(p{-}1)\Z & \text{if } p\equiv 5 \pmod{12}\\
            \Z/e\Z\times \Z/2\Z& \text{if } p\equiv 1 \pmod{12},
        \end{cases}
    \]
again corresponding to the case of one, zero, respectively two extensions of $\sigma_\delta$ to an involution on $\F_p(x, h^2)$. 

Finally, $f_\delta \in \F_p(x, h^2)$ because of the relations stated above and:
\begin{itemize}
    \item If $p\equiv 1\pmod{12}$ then the Galois group of $\F_p(x, h^2)/\F_p(t_\delta)$ is not cyclic and $\F_p(t_\delta, f_\delta)$ is a subfield of $\F_p(x, h^2)$ of order $e$.
    \item If $p\equiv 5\pmod{12}$ then the Galois group $\F_p(x, h^2)/\F_p(t_\delta)$ is cyclic. The question then becomes to determine whether $f_\delta$ is fixed by $\sigma_\delta$. However, the unique element of order $2$ in the group in this case belongs to the group $S$ sending $h^2$ to $-h^2$ and thus $f_\delta$ is not fixed.
    \item If $p \equiv 7 \pmod{12}$ then the Galois group of $\F_p(x, h^2)/\F_p(t_\delta)$ is cyclic. There is one involution extending $\sigma$, corresponding to $u=\frac 1 9$. We compute
    \[\textstyle \sigma_\delta(f_\delta)= \sigma_\delta(h^2) (1-8\sigma_\delta(x)) = \frac 1 9 h^2 (1-8x)^2 (1-8\sigma_\delta(x)) = h^2\cdot (1-8x) = f_\delta,\]
    so $f_\delta$ is fixed and thus contained in a proper subfield of $\F_p(x, h^2)$ of order $e$.
    \item If $p \equiv 11 \pmod{12}$ the prolongation of $\sigma_\delta$ corresponds to $u=-\frac 1 9$, and this time $f_\delta$ changes sign under $\sigma_\delta$.
\end{itemize}

\subsection{The Almkvist--Zudilin Numbers}
\label{ssec:AZ}

For the AZ-numbers we set $t_\xi\coloneqq \frac x {(1+x)(1-8x)}$ and find the involution $\sigma_\xi\coloneqq x\mapsto -\frac 1 {8x}$ of $\F_p(x)$ fixing $\F_p(t_\xi)$. This time we obtain the simple relation $H=\sigma(H)\cdot x^{p-1}$ for all prime numbers $p$. The involution $\sigma_\xi$ can be extended to $\F_p(x, h^2)$ by $\sigma(h)^2=u h^2 x^2$ with $u\in S$; it is an involution if and only if we can take $u=\pm 8$. Now $8$ is a square if and only if $p\equiv \pm 1 \pmod 8$ and $-1$ is a square if and only if $p\equiv 1, 3 \pmod 5$. Thus 
\[\Gal(\F_p(x, h^2)/\F_p(t_\xi)) = 
        \begin{cases}
            \Z/(p{-}1)\Z\simeq \Z/e\Z\times \Z/2\Z & \text{if } p\equiv 3,7 \pmod 8\\
            \Z/(p{-}1)\Z & \text{if } p\equiv 5 \pmod 8\\
            \Z/e\Z\times \Z/2\Z& \text{if } p\equiv 1 \pmod 8.
        \end{cases}
    \]
Finally, $f_\xi \in \F_p(x, h^2)$ and investigations depending on the congruence class of $p\pmod 8$ yield the following:
\begin{itemize}
    \item If $p\equiv 1 \pmod 8$ the Galois group $\Gal(\F_p(x, h^2)/\F_p(t_\xi))$ is not cyclic, and thus $f_\xi$ is in a proper subfield of degree $e$.
    \item If $p\equiv 3 \pmod 8$ the unique prolongation of $\sigma_\xi$ as an involution corresponds to $u=-8$ and because of 
    \[\textstyle \sigma_\xi (f_\xi)= \sigma(h^2) (1+\sigma_\xi(x))(1-8\sigma_\xi(x)) =  h^2 (1-8x)(1+x)= f_\xi,\]
    we conclude that $f_\xi$ lies in a proper subfield of $\F_p(x, h^2)$ of degree $e$.
    \item If $p\equiv 5 \pmod 8$ the unique element of order $2$ in $\Gal(\F_p(x, h^2)/\F_p(t_\xi)$ belongs to the subgroup of squares of $\F_p^\times$ and sends $h$ so $-h$ and thus does not fix $f_\xi$. So  $\F_p(t_\xi, f_\xi)=\F_p(x, h)$.
    \item If $p\equiv 7 \pmod 8$ the prolongation of $\sigma_\xi$ as an involution corresponds to $u=+8$, and this time $f_\xi$ changes sign under it and thus is not fixed by it. Again, $\F_p(t_\xi, f_\xi)=\F_p(x, h)$.
\end{itemize}

\subsection{A Zoo of Further Examples} \label{ssec:zoo}

First, we consider the three other sequences of  Zagier's sporadic examples of integral sequences satisfying a three-term recurrence relation~\cite{Zag09}.
It is known that these sequences are $p$-Lucas for all primes $p$~\cite{Malik-Straub}; therefore, their generating series $f(t)$ satisfies the relation $f(t) \equiv A_p(t) f(t)^p \pmod p$, where $A_p(t) \in \F_p[t]$ is the truncation at $t^p$ of $f(t) \bmod p$. As before, we are interested in the factorization of $A_p(t)$ of the form $A_p(t) = P(t) B_p(t)^2$.
Table~\ref{tab:1} shows how $P(t)$ varies with respect to $p$.

\newcommand{\sequence}[2]{
  \multirow[t]{1}*{ 
  \renewcommand\arraystretch{1}
  \begin{tabular}{@{}c@{}} ~ \\[2ex]
    \href{https://oeis.org/#1}{OEIS #1} \vspace{0.5ex} \\
    {\small $#2$}
  \end{tabular}}
}

\renewcommand\arraystretch{1.8}

\begin{figure}
\centering
\begin{tabular}{|c|c|c|c|}
     \hline
     Sequence & Conditions & Coefficient $P(t)$ \\
     \hline \hline

     \sequence
       {A229111}
       {2\cdot (-1)^n\cdot \sum_{k=0}^n \binom n k^3 \big(\binom{4n-5k-1}{3n}{+}\binom{4n-5k}{3n}\big)}
     & $\legendre{-1}{p}= 1$  &  $1$ \\
     & $\legendre{-1}{p}=-1$  &  $1{-}22t{+}125t^2$\\

     \hline

     \sequence
       {A290575}
       {\sum_{k=0}^n \binom n k^2 \binom{2k}{n}^2}
     & $\legendre{-2}{p}=1$ &  $1$ \\
     & $\legendre{-2}{p}=-1$ & $1{-}24t{+}16t^2$\\

     \hline

     \sequence
       {A290576}
       {\sum_{k=0}^n \sum_{\ell=0}^n \binom n k^2 \binom n \ell \binom k \ell \binom{k+\ell}{n}}
     & $\legendre{-1}{p}=1$ &  $1$ \\
     & $\legendre{-1}{p}=-1$ & $1{-}18t{+}27t^2$\\

     \hline

\end{tabular}
\caption{Zagier's sporaric examples~\cite{Zag09,AVZ11}} \label{tab:1}
\end{figure}

We pursue our investigations with other sequences associated to modular functions (of a given level), see Table~\ref{tab:2}.
Thanks to~\cite{ABD19,BTY25}, we know that all the series appearing in this table are $p$-Lucas except for levels $17$, $20$ and $23$.
However, in the former cases, they continue to satisfy a relation of the form $f(t) \equiv A_p(t) f(t)^p \pmod p$, where now $A_p(t) \in \F_p(t)$ is now a rational function; this follows from the main theorem of~\cite{Var23}.
As a conclusion, in all cases, $A_p(t)$ is well defined and it makes sense to study its factorization as $A_p(t) = P(t) B_p(t)^2$ as before.

In all the examples of Tables~\ref{tab:1} and~\ref{tab:2}, we computationally observed that the patterns for the polynomials $P(t)$ depend on explicit quadratic residues conditions for the prime~$p$.
 These patterns are very much in line with those recorded in Theorems \ref{thm:apery}, \ref{thm:Ap}, \ref{thm:domb} and \ref{thm:az}, and one can apply, case by case, a similar strategy to establishing the claims in the tables rigorously.
The principal feature of the underlying series $f(t)$ is the presence of suitable rational parameterizations $t=t(x)$ such that $f(t(x))=\rho(x)h(x)^2$ for a function $h(x)$ solving a second order linear differential equation and in turn related to an arithmetic $_2F_1$ hypergeometric function $g(y)$ via another parameterization $y=y(x)$; namely, $h(x)=\lambda(x)g(y(x))$.
In Ap\'ery's case (but also for the Domb and Almkvist--Zudilin sequences), all the intermediate functions are rational in~$x$:
\[
t(x)=\frac{x(1-8x)}{1+x}, \quad \rho(x)=1+x, \quad y(x)=\frac{27x^2}{(1-2x)^3}, \quad \lambda(x)=\frac1{1-2x}.
\]
However, in general $\rho(x)$, $y(x)$ and $\lambda(x)$ are algebraic, which can make the structure of the corresponding Galois group more involved.
The existence of such $x$-parameterizations is a consequence of modular parameterizations of all such Ap\'ery-like sequences; the details of the latter can be found in the corresponding references where our examples originate from. We leave the details to an interested reader as exercises.

Going further, the observations on the splitting pattern of $A_p(t)$ as $P(t)B_p(t)^2$ suggest that the conditions of $p$ depends only on the quadratic residues of the divisors of the squarefree part of the corresponding level.
We also leave it to the interested reader to formulate and prove a precise statement about the pattern that unfolds here. We conclude by remarking that these observations provide more computational evidence for the conjectures on uniformity properties of Galois groups of reductions of D-finite series, that were formulated in~\cite{CFV25}. 
 
\renewcommand{\sequence}[3]{
  \multirow[t]{1}*{ 
  \renewcommand\arraystretch{1}
  \begin{tabular}{@{}c@{}} ~ \\[1ex]
    \href{https://oeis.org/#1}{OEIS #1} \vspace{-0.5ex} \\
    {\scriptsize #2} \vspace{-0.5ex}\\
    {\scriptsize #3}
  \end{tabular}}
}
\newcommand{\sequencewithoutOEIS}[3]{
  \multirow[t]{1}*{ 
  \renewcommand\arraystretch{1}
  \begin{tabular}{@{}c@{}} ~ \\[1ex]
    {\scriptsize #2} \vspace{-0.5ex}\\
    {\scriptsize #3}
  \end{tabular}}
}
\newcommand{\sequenceformula}[3]{
  \multirow[t]{1}*{ 
  \renewcommand\arraystretch{1}
  \begin{tabular}{@{}c@{}} ~ \\[2ex]
    \href{https://oeis.org/#1}{OEIS #1} \vspace{-0.5ex} \\
    {\scriptsize #2} \\
    {\small $#3$}
  \end{tabular}}
}

\begin{figure}
\centering
\begin{tabular}{|@{}c@{}|c|c|c|}
     \hline
     Sequence & Level & Conditions & Coefficient $P(t)$ \\
     \hline \hline

     \sequenceformula
       {A274786}
       {\cite{Coo12}, \cite{BBMW15}, \cite{Coo17}, \cite{HSY23}}
       {\binom{2n} n \sum_{k=0}^n \binom n k^2 \binom {n+k} k}
      & 5 & $\legendre{-5}{p}=1$ & $1$ \\
      &   & $\legendre{-5}{p}=-1$ & $1{-}44t{-}16t^2$ \\

     \hline

     \sequenceformula
       {A181418}
       {\cite{Coo12}, \cite{Coo17}, \cite{HSY23}}
       {\binom{2n} n \sum_{k=0}^n \binom n k^3}
      & 6 & $\legendre{-3}{p} = \legendre{-6}{p} = 1$ & $1$ \\
      &   & $\legendre{-3}{p} = -1, \legendre{-6}{p} = 1$ & $1{+}4t$ \\
      &   & $\legendre{-3}{p} = 1, \legendre{-6}{p} = -1$ & $1{-}32t$ \\
      &   & $\legendre{-3}{p} = \legendre{-6}{p} = -1$ & $(1{+}4t)(1{-}32t)$ \\

     \hline

     \sequenceformula
       {A183204}
       {\cite{Coo12}, \cite{Coo17}, \cite{HSY23}}
       {\sum_{k=0}^n \binom n k^2 \binom {2k} n \binom {k+n} n}
      & 7 & $\legendre{-7}{p}=1$ & $1$ \\
      &   & $\legendre{-7}{p}=-1$ & $(1{+}t) (1{-}27t)$ \\

     \hline

     \sequenceformula
       {A005260}
       {\cite{Coo12}, \cite{Coo17}, \cite{HSY23}}
       {\sum_{k=0}^n \binom n k^4}
      &10 & $\legendre{-5}{p} = \legendre{-10}{p} = 1$ & $1$ \\
      &   & $\legendre{-5}{p} = -1, \legendre{-10}{p} = 1$ & $1{+}4t$ \\
      &   & $\legendre{-5}{p} = 1, \legendre{-10}{p} = -1$ & $1{-}16t$ \\
      &   & $\legendre{-5}{p} = \legendre{-10}{p} = -1$ & $(1{+}4t)(1{-}16t)$ \\

     \hline

     \sequence
       {A284756}
       {\cite[$c_{11}$ Thm.~4.7]{CGY15}}
       {\cite{Coo17}, \cite{HSY23}}
      &11 & $\legendre{-11}{p}=1$ & $1$ \\
      &   & $\legendre{-11}{p}=-1$ & $1{-}20t{+}56t^2{-}44t^3$ \\

     \hline

     \sequencewithoutOEIS
       {}
       {\cite[Cor~3.6]{HSY20}}
       {}
      &17 & $\legendre{-17}{p}=1$ & $1$ \\
      &   & $\legendre{-17}{p}=-1$ & $1{-}16t{-}66t^2{-}48t^3{-}127t^4$ \\

     \hline

     \sequence
       {A219692}
       {\cite[$s_{18}$ on p.171]{Coo12}}
       {}
      &18 & $\legendre{-1}{p} = \legendre{-2}{p} = 1$ & $1$ \\
      &   & $\legendre{-1}{p} = -1, \legendre{-2}{p} = 1$ & $1{-}12t$ \\
      &   & $\legendre{-1}{p} = 1, \legendre{-2}{p} = -1$ & $1{-}16t$ \\
      &   & $\legendre{-1}{p} = \legendre{-2}{p} = -1$ & $(1{-}12t)(1{-}16t)$ \\

     \hline

     \sequencewithoutOEIS
       {}
       {\cite[Cor~3.7]{HSY18}}
       {}
      &20 & $\legendre{-1}{p} = \legendre{-5}{p} = 1$ & $1$ \\
      &   & $\legendre{-1}{p} -1, = \legendre{-5}{p} = 1$ & $1{-}4t$ \\
      &   & $\legendre{-1}{p} = 1, \legendre{-5}{p} = -1$ & $1{-}12t{+}16t^2$ \\
      &   & $\legendre{-1}{p} = \legendre{-5}{p} = -1$ & $(1 -4t) (1{-}12t{+}16t^2)$ \\

     \hline

     \sequencewithoutOEIS
       {}
       {\cite[$c_{23}$ Thm.~4.7]{CGY15}, \cite{Coo17}}
       {}
      &23 & $\legendre{-23}{p}=1$ & $1$ \\
      &   & $\legendre{-23}{p}=-1$ & \small $(1{-}3t{+}2t^2{+}t^3)(1{-}11t{+}22t^2{-}19t^3)$ \\

     \hline
\end{tabular}
\caption{Examples of sequences connected to modular forms} \label{tab:2}
\end{figure}
\newpage
\printbibliography

\textsc{CNRS; IMB, Université de Bordeaux, 351 Cours de la Libération, 33405 Talence,
France}\\
\textit{Email: }\href{mailto:xavier@caruso.ovh}{\texttt{xavier@caruso.ovh}}\\

\textsc{Faculty of Mathematics, University of Vienna, Oskar-Morgenstern-Platz 1, 1090 Vienna, Austria}\\
\textit{Email: }\href{mailto:florian.fuernsinn@univie.ac.at}{\texttt{florian.fuernsinn@univie.ac.at}}\\

\textsc{Institut de Mathématiques de Toulouse, Université Paul Sabatier, 118 route de Narbonne, 31062 Toulouse Cedex 9, France
}\\
\textit{Email: }\href{mailto:daniel.vargas-montoya@math.univ-toulouse.fr}{\texttt{daniel.vargas-montoya@math.univ-toulouse.fr}}\\

\textsc{IMAPP, Radboud University Nijmegen, PO Box 9010, 6500 GL Nijmegen, The Netherlands}\\
\textit{Email: }\href{mailto:w.zudilin@math.ru.nl}{\texttt{w.zudilin@math.ru.nl}}\\

\end{document}